\def\spc{\enspace}
\def\C{\mathbb{C}}
\def\N{\mathbb{N}}
\def\bfa{\mathbf{a}}
\def\bfb{\mathbf{b}}
\def\bfc{\mathbf{c}}
\def\bse{\mathbf{\hat e}}
\def\bfu{\mathbf{u}}
\def\bfv{\mathbf{v}}
\def\bfw{\mathbf{w}}
\def\bfx{\mathbf{x}}
\def\bfy{\mathbf{y}}
\def\tensor{\otimes}
\def\dtensor{\tensor\cdots\tensor}
\def\Id{\mathsf{Id}}
\def\tr{\mathsf{tr}}
\def\det{\mathsf{det}}
\def\adj{\mathsf{adj}}
\def\sgn{\mathrm{sgn}}
\def\mnn{M_{n\times n}}
\def\vprod#1{V^{\tensor #1}}
\def\pmx#1#2{\begin{pmatrix}#1\\#2\end{pmatrix}}                      % gives a two-row matrix
\def\tmx#1#2{\begin{bmatrix}#1\\#2\end{bmatrix}}                      % gives a two-row matrix
\def\tmxt#1#2{\bigl[\begin{smallmatrix}#1\\#2\end{smallmatrix}\bigr]} % gives a two-row matrix in the text
\newtheorem{prop}{Proposition}[section]
\newtheorem{thm}[prop]{Theorem}
\newtheorem{lemma}[prop]{Lemma}
\theoremstyle{definition}
\newtheorem{defn}[prop]{Definition}
\theoremstyle{remark}
\newtheorem{remark}[prop]{Remark}
\newtheorem{example}[prop]{Example}
\newtheorem{exercise}[prop]{Exercise}
\newtheorem*{soln}{Solution}
\begin{document}

%
% tikz macro extensions
%
%
% commands beginning with 't' are for use inside a tikz environment...
%   ...all come with inherent shifts applied when the draw command is used
%
% other commands are for use in the document
%
%

\tikzstyle heightone=[scale=.7,xscale=.6,shift={(0,-.4)}]
\tikzstyle{every picture}=[baseline=0pt,heightone]

\tikzstyle heightones=[scale=.8,xscale=.35,shift={(0,.1)}]
\tikzstyle heightoneonehalf=[scale=.9,shift={(0,-.2)}]
\tikzstyle heighttwo=[scale=.8,shift={(0,-.4)}]
\tikzstyle heighttwos=[scale=.5,xscale=.6,shift={(0,-.1)}]
\tikzstyle heightthree=[scale=.6,shift={(0,-.9)}]
\tikzstyle heightthrees=[scale=.4,xscale=.7,shift={(0,-.2)}]

% simple label... nothing else drawn
\tikzstyle nlabel=[draw=none,fill=none,shape=rectangle,inner sep=2pt,scale=.8]
\tikzstyle slabel=[draw=none,fill=none,shape=rectangle,inner sep=2pt,scale=.6]

\tikzstyle trivalent=[very thick]
\tikzstyle nodedot=[shape=circle,fill=black,draw,minimum size=2pt,inner sep=0pt]
\tikzstyle dotedge=[nodedot,pos=.5]
\tikzstyle midedge=[coordinate,pos=.5]

% for drawing matrices==circles at specific points or along lines
\tikzstyle matrix=[semithick,shape=ellipse,draw,fill=white,scale=.8,inner sep=1pt]
\tikzstyle smatrix=[semithick,shape=ellipse,draw,fill=white,scale=.6,inner sep=1pt]
\tikzstyle midmx=[matrix,pos=.5]
\tikzstyle midsmx=[smatrix,pos=.5]

% for drawing vectors==rectangles
\tikzstyle vector=[thin,rectangle,fill=white,scale=.8,inner sep=2pt]
\tikzstyle svector=[thin,rectangle,fill=white,scale=.7,inner sep=2pt]

% symmetrizer styles
\tikzstyle symlabel=[draw=none,fill=none,white,scale=.8]
\tikzstyle asymlabel=[draw=none,fill=none,black,scale=.8]
\tikzstyle symlabelleft=[nlabel,left]
\tikzstyle symlabelright=[nlabel,right]

% ciliation
\def\cilson{\tikzstyle ciliated=[purple]}
\def\cilsoff{\tikzstyle ciliated=[draw=none]}
\cilson
% draws an arrow along the edge
\def\arrowson{\tikzstyle oriented=[-stealth,blue]}
\def\arrowsoff{\tikzstyle oriented=[-,black]}
\arrowson
% draws a tick across the edge labeled by a matrix
\tikzstyle nullifier=[thick,snake=coil,segment amplitude=.5pt,segment length=2pt,segment aspect=0]

%%%%%% ===================================================== %%%%%%%
%%%%%% ||    STRAIGHT STRANDS WITH MATRICES & VECTORS     || %%%%%%%
%%%%%% ===================================================== %%%%%%%

% straight strands with one matrix
\def\iml(#1){\tikz{\draw(0,-.2)..controls+(.05,.5)and+(-.05,-.5)..node[midsmx]{$#1$}(0,1.2);}}
\def\ims(#1){\tikz{\draw(0,0)..controls+(.05,.5)and+(-.05,-.5)..node[midsmx]{$#1$}(0,1);}}

% straight strands with arrows and matrix
\def\ima(#1){\tikz{\hpathmxa(0,0,0)(0,1,0)(#1)}}
\def\imar(#1){\tikz{\hpathmxar(0,0,0)(0,1,0)(#1)}}

% straight strand with two matrices
\def\imm(#1,#2){\tikz[heighttwo]{\draw(0,0)--(0,.2)..controls+(.05,.3)and+(-.05,-.3)..node[midsmx]{$#2$}(0,1)
	..controls+(.05,.3)and+(-.05,-.3)..node[midsmx]{$#1$}(0,1.8)--(0,2);}}

% straight strand with three matrices
\def\immm(#1,#2,#3){\tikz[heightthree]{\draw(0,0)--(0,.2)..controls+(.05,.3)and+(-.05,-.3)..node[midsmx]{$#3$}(0,1)
	..controls+(.05,.3)and+(-.05,-.3)..node[midsmx]{$#2$}(0,2)..controls+(.05,.3)and+(-.05,-.3)..node[midsmx]{$#1$}(0,2.8)--(0,3);}}

% straight strand with vector or covector (square box)
\def\ivup(#1){\tikz{\draw(0,0)..controls+(.05,.5)and+(-.05,-.5)..(0,1)node[vector]{$#1$};}}
\def\iupv(#1){\tikz{\draw(0,0)node[vector]{$#1$}..controls+(.05,.5)and+(-.05,-.5)..(0,1);}}

% straight strand with both vector & covector (inner product)
\def\ivv(#1,#2){\tikz{\draw(0,0)node[vector]{$#2$}..controls+(.05,.5)and+(-.05,-.5)..(0,1)node[vector]{$#1$};}}

% straight strand with matrix and vector or covector
\def\ivm(#1,#2){\tikz{\draw(0,0)..controls+(.05,.5)and+(-.05,-.5)..node[midsmx]{$#2$}(0,1)node[svector]{$#1$};}}
\def\imv(#1,#2){\tikz{\draw(0,0)node[svector]{$#2$}..controls+(.05,.5)and+(-.05,-.5)..node[midsmx]{$#1$}(0,1);}}

% matrix coefficient diagram (vector/matrix/covector)
\def\ivmv(#1,#2,#3){\tikz{
	\draw(0,-.1)node[svector]{$#3$}..controls+(.05,.5)and+(-.05,-.5)..node[midsmx]{$#2$}(0,1.1)node[svector]{$#1$};
}}

%%%%%% ===================================================== %%%%%%%
%%%%%% ||    CAPS,CUPS,CIRCLES WITH MATRICES/VECTORS      || %%%%%%%
%%%%%% ===================================================== %%%%%%%

% cups with one or two matrices
\def\cupml(#1){\tikz{
		\draw(0,1)..controls+(-.03,-.3)and+(.03,.3)..node[midsmx]{$#1$}(0,.3)
			..controls+(-.05,-.5)and+(-.05,-.5)..(1,.3)..controls+(.03,.3)and+(-.03,-.3)..(1,1);}}
\def\cupmr(#1){\tikz{
		\draw(0,1)..controls+(-.03,-.3)and+(.03,.3)..(0,.3)
			..controls+(-.05,-.5)and+(-.05,-.5)..(1,.3)..controls+(.03,.3)and+(-.03,-.3)..node[midsmx]{$#1$}(1,1);}}
\def\cupmm(#1,#2){\tikz{
		\draw(0,1)..controls+(-.03,-.3)and+(.03,.3)..node[midsmx]{$#1$}(0,.3)
			..controls+(-.05,-.5)and+(-.05,-.5)..(1,.3)..controls+(.03,.3)and+(-.03,-.3)..node[midsmx]{$#2$}(1,1);}}

% caps with one or two matrices
\def\capml(#1){\tikz{
		\draw(0,0)..controls+(.03,.3)and+(-.03,-.3)..node[midsmx]{$#1$}(0,.7)
			..controls+(.05,.5)and+(.05,.5)..(1,.7)..controls+(-.03,-.3)and+(.03,.3)..(1,0);}}
\def\capmr(#1){\tikz{
		\draw(0,0)..controls+(.03,.3)and+(-.03,-.3)..(0,.7)
			..controls+(.05,.5)and+(.05,.5)..(1,.7)..controls+(-.03,-.3)and+(.03,.3)..node[midsmx]{$#1$}(1,0);}}
\def\capmm(#1,#2){\tikz{
		\draw(0,0)..controls+(.03,.3)and+(-.03,-.3)..node[midsmx]{$#1$}(0,.7)
			..controls+(.05,.5)and+(.05,.5)..(1,.7)..controls+(-.03,-.3)and+(.03,.3)..node[midsmx]{$#2$}(1,0);}}

% circles with one or two matrices
\def\circml(#1){\tikz{
		\draw(0,.7)..controls+(-.03,-.3)and+(.03,.3)..node[midsmx]{$#1$}(0,.3)
			..controls+(-.05,-.5)and+(-.06,-.6)..(1,.5)..controls+(.06,.6)and+(.05,.5)..(0,.7);}}
\def\circmll(#1){\tikz{
		\draw(1.2,.5)..controls+(-.08,-.8)and+(-.07,-.7)..(-.2,.4)..controls+(.03,.3)and+(-.03,-.3)..node[midmx]{$#1$}(-.2,.6)
			..controls+(.07,.7)and+(.08,.8)..(1.2,.5);}}
\def\circmr(#1){\tikz{
		\draw(0,.5)..controls+(-.06,-.6)and+(-.05,-.5)..(1,.3)..controls+(.03,.3)and+(-.03,-.3)..node[midsmx]{$#1$}(1,.7)
			..controls+(.05,.5)and+(.06,.6)..(0,.5);}}
\def\circmrl(#1){\tikz{
		\draw(-.2,.5)..controls+(-.08,-.8)and+(-.07,-.7)..(1.2,.4)..controls+(.03,.3)and+(-.03,-.3)..node[midmx]{$#1$}(1.2,.6)
			..controls+(.07,.7)and+(.08,.8)..(-.2,.5);}}
\def\circmm(#1,#2){\tikz{
		\draw(0,.7)..controls+(-.03,-.3)and+(.03,.3)..node[midsmx]{$#1$}(0,.3)
			..controls+(-.05,-.5)and+(-.05,-.5)..(1,.3)..controls+(.03,.3)and+(-.03,-.3)..node[midsmx]{$#2$}(1,.7)
			..controls+(.05,.5)and+(.05,.5)..(0,.7);}}

%%%%%% ===================================================== %%%%%%%
%%%%%% ||   COMMANDS FOR ARBITRARY PERMUTATIONS, ETC.     || %%%%%%%
%%%%%% ===================================================== %%%%%%%

% matrix element product with permutations thrown in... \xa is first position=vec label, \xb is label, \xc is second position=covec label
\def\tpermxel(#1){
	\tikz[heighttwo]{
		\foreach\xa/\xb/\xc in{#1}{
			\draw(\xa,0)node[svector]{$\xa$}..controls+(.05,.5)and+(-.05,-.5)..node[midsmx,pos=.9]{$\xb$}(\xa,1)
				..controls+(.05,.5)and+(-.05,-.5)..(\xc,2)node[svector]{$\xc$};}
	}}

%%%%%% ===================================================== %%%%%%%
%%%%%% ||   COMMANDS WITH ARBITRARY NUMBER OF STRANDS     || %%%%%%%
%%%%%% ===================================================== %%%%%%%

% with matrices (#1=matrix, #2=label)
\def\mupn(#1,#2){\tikz[xscale=.8]{
		\foreach\xa in{0,.5,2}{\draw(\xa,0)..controls+(.05,.5)and+(-.05,-.5)..node[midsmx]{$#1$}(\xa,1);}
		\node[nlabel,right]at(2,1){$#2$};
		\foreach\xa/\xb in{1/.2,1.5/.2,1/.8,1.5/.8}{\node[nlabel,scale=1.5]at(\xa,\xb){.};}
}}

% draws vertical strand with several matrices,#1 is height, #2 is set of position/matrices
\def\mupupn#1(#2){\tikz[heightthree]{
		\foreach\xa/\xb in{#2}{
			\draw(0,\xa)..controls+(.03,.3)and+(-.03,-.3)..node[midsmx]{$\xb$}([shift={(0,1)}]0,\xa);
		}
	}
}

% draws a circle with several matrices on it...#1 is width, #2 is height, #3 is list of left nodes with heights, #4 the list of right nodes with heights
\def\circnl#1#2(#3)(#4){\tikz{%
	\draw(0,0)\foreach\xa/\xb in{#3}{--(0,\xa)node[smatrix]{\xb}}--(0,#2)..controls+(0,.8)and+(0,.8)..(#1,#2)%
		\foreach\xa/\xb in{#4}{--(#1,\xa)node[smatrix]{\xb}}--(#1,0)..controls+(0,-.8)and+(0,-.8)..(0,0);%
}}

% draws node w/ all down strands
\def\nodendownmx(#1,#2){\tikz{
	\cilur<(0,1)>(-.5,.2)
	\foreach\xa in{-1.5,-.75,1.5}{\hpathmxa(\xa,0,0)(0,1,\xa)(#2)}
	\foreach\xa in{0,.75}{\hpathph(\xa,0,0)(0,1,\xa)}
	\draw(.2,.25)node[slabel]{$#1$};
}}	

% draws k->n strands, matrix on bottom
\def\asymnkmx(#1,#2,#3){\tikz[heighttwo,xscale=.7]{
	\cilr<(0,1)>(-.8)
	\foreach\xa in{-2,-1,2}{\hpathmxa(0,1,\xa)(\xa,0,0)(#3)}
	\foreach\xa in{0,1}{\hpathph(0,1,\xa)(\xa,0,0)}
	\foreach\xa in{-2,1,2}{\hpatha(0,1,\xa)(\xa,2,0)}
	\foreach\xa in{-1,0}{\hpathph(0,1,\xa)(\xa,2,0)}
	\draw(-.3,1.75)node[slabel]{$#1$};
	\draw(.3,.25)node[slabel]{$#2$};
}}
% draws k->n strands, matrix on top
\def\asymnkmxB(#1,#2,#3){\tikz[heighttwo,xscale=.7]{
	\cilr<(0,1)>(-.8)
	\foreach\xa in{-2,-1,2}{\hpatha(0,1,\xa)(\xa,0,0)}
	\foreach\xa in{0,1}{\hpathph(0,1,\xa)(\xa,0,0)}
	\foreach\xa in{-2,1,2}{\hpathmxa(0,1,\xa)(\xa,2,0)(#3)}
	\foreach\xa in{-1,0}{\hpathph(0,1,\xa)(\xa,2,0)}
	\draw(-.3,1.75)node[slabel]{$#1$};
	\draw(.3,.25)node[slabel]{$#2$};
}}

%%%%%% ======================================================= %%%%%%%
%%%%%% ||   MATRIX-ELEMENT-PERMUTATION/TL COMBINATIONS      || %%%%%%%
%%%%%% ======================================================= %%%%%%%

%% -=-=-=- Permutation Plus Elements -=-=-=- %%
% with labels: #1=permutation,#2=position/label
\def\plopen(#1)(#2){\tikz[heighttwo,xscale=.7]{%
	\tperm<(0,.3)>{#1}\tmupp<(0,1.3)>{.7}{#2}
}}
% with labels & basis: #1=bot nodes,#2=permutation,#3=position/label,#4=top nodes
\def\plbasis(#1)(#2)(#3)(#4){\tikz[heightthree,xscale=.7]{%
	\tbbasis<(0,0)>{#1}\tperm<(0,1)>{#2}\tmupp<(0,2)>{.7}{#3}\ttbasis<(0,2.7)>{#4}
}}
% with labels & closed: #1=#strands,#2=permutation,#3=position/label
\def\plclose#1(#2)(#3){\tikz[heighttwo,scale=.7]{%
	\tperm<(0,0)>{#2}\tmupp<(0,1)>{.7}{#3}\tcloser<(#1,0)>{#1}{1.7}
}}
% smaller version of the above
\def\plclosesmall#1(#2)(#3){\tikz[heighttwo,scale=.5]{%
  \tikzstyle{every node}=[circle,draw,fill=white,scale=.8,inner sep=1pt]%
	\tperm<(0,1)>{#2}\tmupp<(0,2)>{.7}{#3}\tcloser<(#1,1)>{#1}{1.7}
}}

%% -=-=-=- Temperley-Lieb Plus Elements -=-=-=- %%
% with labels: #1-#3=tl,#4=position/label
\def\tlopen(#1)(#2)(#3)(#4){\tikz[heighttwo,xscale=.7]{%
	\ttl<(0,.3)>{#1}{#2}{#3}\tmupp<(0,1.3)>{.7}{#4}}}
% with labels & basis: #1=bot nodes,#2-#4=tl,#5=position/label,#6=top nodes
\def\tlbasis(#1)(#2)(#3)(#4)(#5)(#6){\tikz[heightthree,xscale=.7]{%
	\tbbasis<(0,0)>{#1}\ttl<(0,1)>{#2}{#3}{#4}\tmupp<(0,2)>{.7}{#5}\ttbasis<(0,2.7)>{#6}}}
% with labels & closed: #1=#strands,#2-#4=tl,#5=position/label
\def\tlclose#1(#2)(#3)(#4)(#5){\tikz[heighttwo,scale=.7]{%
	\ttl<(0,0)>{#2}{#3}{#4}\tmupp<(0,1)>{.7}{#5}\tcloser<(#1,0)>{#1}{1.7}}}

%% -=-=-=- Symmetrizer Plus Elements -=-=-=- %%
% with labels: #1=n,#2=position/label
\def\slopen(#1)(#2){\tikz[heighttwo,xscale=.7]{%
	\tsupp<(0,0)>{#1}{1}\tmupp<(0,1)>1{#2}}}
% with labels & basis: #1=bot nodes,#2=n,#3=position/label,#4=top nodes
\def\slbasis(#1)(#2)(#3)(#4){\tikz[heightthree,xscale=.7]{%
	\tbbasis<(0,0)>{#1}\tsupp<(0,1)>{#2}{1}\tmupp<(0,2)>1{#3}\ttbasis<(0,3)>{#4}}}
% with labels & closed: #1=#strands,#2=n,#3=position/label
\def\slclose#1(#2)(#3){\tikz[heighttwo,scale=.7]{%
	\tsupp<(0,0)>{#2}{1}\tmupp<(0,1)>{.7}{#3}\tcloser<(#1,0)>{#1}{1.7}}}

%%%%%% ===================================================== %%%%%%%
%%%%%% ||   SPECIALIZED DIAGAMS... ONE OR TWO MATRICES    || %%%%%%%
%%%%%% ===================================================== %%%%%%%

% matrix looping product
\def\mloopm(#1,#2){
	\tikz[heightoneonehalf]{
		\draw(0,-.2)..controls+(.05,.5)and+(-.3,-.4)..(1,1)node[smatrix]{$#2$}
			..controls+(.6,.8)and+(.06,.6)..(2,.5)..controls+(-.06,-.6)and+(.3,-.4)..(1,0)
			..controls+(-.3,.4)and+(-.05,-.5)..(0,1)node[smatrix]{$#1$}..controls+(.03,.3)and+(-.03,-.3)..(0,1.5);
	}}
% matrix product with loop replaced by sum over #3
\def\mloopmi(#1,#2,#3){
	\tikz[heightoneonehalf]{
		\draw(0,-.2)..controls+(.05,.5)and+(-.3,-.4)..(1,1)node[smatrix]{$#2$}--(1,1.7)node[svector]{$#3$};
		\draw(1,-.2)node[svector]{$#3$}..controls+(-.1,.5)and+(-.05,-.5)..(0,1)node[smatrix]{$#1$}..controls+(.03,.3)and+(-.03,-.3)..(0,1.5);
	}}

% matrices applied to binor identity
\def\mbinor(#1,#2){
	\plopen(1/2,2/1)(1/#1,2/#2)=\plopen(1/1,2/2)(1/#1,2/#2)-\tlopen(1/2)(1/2)()(1/#1,2/#2)
}
\def\mbinorrela(#1,#2){
	\tikz[heighttwo]{
		\draw(0,0)..controls+(.04,.4)and+(-.04,-.4)..node[midsmx]{$#1$}(0,2);
		\draw(1,0)..controls+(.04,.4)and+(-.04,-.4)..node[midsmx]{$#2$}(1,2);
	}
}
\def\mbinorrelb(#1){
	\tikz[heighttwo]{
		\ttl<(0,0)>{0/1}{0/1}{}
		\draw(0,1)..controls+(.04,.4)and+(-.04,-.4)..node[midsmx]{$#1$}(0,2);
		\draw(1,1)..controls+(.04,.4)and+(-.04,-.4)..(1,2);
	}
}

% triple vector identitiy
\def\triplev(#1,#2,#3){
	\tikz[xscale=1.2]{\hpathva(-.5,0,0)(-.3,.7,-.2)(#1)\hpathva(0,0,0)(-.3,.7,.2)(#2)\hpathva(.5,0,0)(0,1,.5)(#3)\hpatha(0,1,-.3)(-.3,.7,0)}
	=\tikz[xscale=1.2]{\hpathva(0,0,0)(.3,.7,-.2)(#2)\hpathva(.5,0,0)(.3,.7,.2)(#3)\hpathva(-.5,0,0)(0,1,-.5)(#1)\hpatha(0,1,.3)(.3,.7,0)}
	=\tikz[xscale=1.2]{\hpathva(-.5,0,0)(-.4,.7,.5)(#1)\hpathva(0,-.3,0)(0,.9,.3)(#2)\hpathva(.7,0,0)(-.4,.7,-.8)(#3)\hpatha(0,.9,-.3)(-.4,.7,0)}
	=\tikz[xscale=1.2]{\hpathva(-.5,0,0)(0,1,-.5)(#1)\hpathva(0,0,0)(0,1,0)(#2)\hpathva(.5,0,0)(0,1,.5)(#3)}
}

%%%%%% ======================================================= %%%%%%%
%%%%%% || SPECIALIZED DIAGRAMS: CLOSING OFF SEVERAL STRANDS || %%%%%%%
%%%%%% ======================================================= %%%%%%%

% with matrices/vectors/covectors (#1=matrix,#2=central label,#3-#5=vector labels)
\def\tpermvmv(#1,#2,#3,#4,#5){\tikz[heighttwo,xscale=1.7]{
		\foreach\xa/\xb in{0/#3,.5/#4,2/#5}{\draw(\xa,0)node[svector]{$\xb$}..controls+(.05,.5)and+(-.05,-.5)..node[midsmx,pos=.7]{$#1$}(\xa,1)
			..controls+(.05,.5)and+(-.05,-.5)..(\xa,2)node[svector]{$\xb$};}
		\filldraw[fill=white](-.2,1.1)rectangle+(2.4,.4);
		\node[nlabel]at(1,1.3){$#2$};
		\foreach\xa/\xb in{1/.5,1.5/.5,1/1.9,1.5/1.9}{\node[nlabel,scale=1.5]at(\xa,\xb){.};}
}}

% the next diagrams close off one or more strands (#1=matrix,#2=label)
\def\tpermvmvca(#1,#2){\tikz[heighttwo,xscale=1.7]{
	%draw straight strands
		\foreach\xa/\xb in{.5/2,2/#2}{\draw(\xa,0)node[svector]{$\xb$}..controls+(.05,.5)and+(-.05,-.5)..node[midsmx,pos=.7]{$#1$}(\xa,1)
			..controls+(.05,.5)and+(-.05,-.5)..(\xa,2)node[svector]{$\xb$};}
	%draw dots
		\foreach\xa/\xb in{1/.5,1.5/.5,1/1.9,1.5/1.9}{\node[nlabel,scale=1.5]at(\xa,\xb){.};}
	%draw loops		
		\draw(0,0)..controls+(.05,.5)and+(-.05,-.5)..node[midsmx,pos=.35]{$#1$}(0,2)..controls+(.08,.8)and+(.1,1)..(2.5,2)
			..controls+(-.05,-.5)and+(.05,.5)..(2.5,0)..controls+(-.08,-.8)and+(-.05,-.5)..(0,0);
	%draw rectangle
		\filldraw[fill=white](-.2,1.1)rectangle+(2.4,.4);
		\node[nlabel]at(1,1.3){$#2$};
	}}
\def\tpermvmvcb(#1,#2){\tikz[heighttwo,xscale=1.7]{
		\foreach\xa/\xb in{.8/3,2/#2}{\draw(\xa,0)node[svector]{$\xb$}..controls+(.05,.5)and+(-.05,-.5)..node[midsmx,pos=.7]{$#1$}(\xa,1)
			..controls+(.05,.5)and+(-.05,-.5)..(\xa,2)node[svector]{$\xb$};}
		\foreach\xa/\xb in{1.2/.5,1.6/.5,1.2/1.9,1.6/1.9}{\node[nlabel,scale=1.5]at(\xa,\xb){.};}
		\draw(0,0)..controls+(.05,.5)and+(-.05,-.5)..node[midsmx,pos=.35]{$#1$}(0,2)..controls+(.08,.8)and+(.1,1)..(3,2)
			..controls+(-.05,-.5)and+(.05,.5)..(3,0)..controls+(-.08,-.8)and+(-.05,-.5)..(0,0);
		\draw(.4,0)..controls+(.05,.5)and+(-.05,-.5)..node[midsmx,pos=.35]{$#1$}(.4,2)..controls+(.05,.5)and+(.1,1)..(2.6,2)
			..controls+(-.05,-.5)and+(.05,.5)..(2.6,0)..controls+(-.05,-.5)and+(-.05,-.5)..(.4,0);
		\filldraw[fill=white](-.2,1.1)rectangle+(2.4,.4);
		\node[nlabel]at(1,1.3){$#2$};
	}}
\def\tpermvmvcc(#1,#2){\tikz[heighttwo,xscale=1.7]{
		\foreach\xa/\xb in{1/.5,1.5/.5,1/1.9,1.5/1.9}{\node[nlabel,scale=1.5]at(\xa,\xb){.};}
		\draw(0,0)..controls+(.05,.5)and+(-.05,-.5)..node[midsmx,pos=.35]{$#1$}(0,2)..controls+(.1,1)and+(.1,1)..(3,2)
			..controls+(-.05,-.5)and+(.05,.5)..(3,0)..controls+(-.1,-1)and+(-.05,-.5)..(0,0);
		\draw(.5,0)..controls+(.05,.5)and+(-.05,-.5)..node[midsmx,pos=.35]{$#1$}(.5,2)..controls+(.07,.7)and+(.08,.8)..(2.7,2)
			..controls+(-.05,-.5)and+(.05,.5)..(2.7,0)..controls+(-.07,-.7)and+(-.05,-.5)..(.5,0);
		\draw(2,0)..controls+(.05,.5)and+(-.05,-.5)..node[midsmx,pos=.35]{$#1$}(2,2)..controls+(.03,.3)and+(.04,.4)..(2.4,2)
			..controls+(-.05,-.5)and+(.05,.5)..(2.4,0)..controls+(-.03,-.3)and+(-.04,-.4)..(2,0);
		\filldraw[fill=white](-.2,1.1)rectangle+(2.4,.4);
		\node[nlabel]at(1,1.3){$#2$};
	}}
	
% part of the characteristic equation... one open strand; otherwise closed up
\def\tchara(#1,#2){\tikz[heighttwo,xscale=1.7]{
		\foreach\xa/\xb in{1/.5,1.5/.5,1/1.9,1.5/1.9}{\node[nlabel,scale=1.5]at(\xa,\xb){.};}
		\draw(0,-.3)..controls+(.05,.5)and+(-.05,-.5)..(0,2.3);
		\draw(.5,.1)..controls+(.05,.5)and+(-.05,-.5)..node[midsmx,pos=.3]{$#1$}(.5,1.9)..controls+(.05,.5)and+(.09,.9)..(2.8,1.8)
			..controls+(-.05,-.5)and+(.05,.5)..(2.8,.2)..controls+(-.09,-.9)and+(-.05,-.5)..(.5,.1);
		\draw(2,.3)..controls+(.05,.5)and+(-.05,-.5)..node[midsmx,pos=.3]{$#1$}(2,1.5)..controls+(.05,.5)and+(.05,.5)..(2.5,1.5)
			..controls+(-.05,-.5)and+(.05,.5)..(2.5,.3)..controls+(-.05,-.5)and+(-.05,-.5)..(2,.3);
		\filldraw[fill=white](-.2,1.1)rectangle+(2.4,.4);
		\node[nlabel]at(1,1.3){$#2$};
	}}
\def\tcharb(#1,#2){\tikz[heighttwo,xscale=1.7]{
		\draw(-.5,-.3)..controls+(.05,.5)and+(-.05,-.5)..(-.5,2.3);
		\foreach\xa/\xb in{1/.5,1.5/.5,1/1.9,1.5/1.9}{\node[nlabel,scale=1.5]at(\xa,\xb){.};}
		\draw(0,.3)..controls+(.05,.5)and+(-.05,-.5)..node[midsmx,pos=.28]{$#1$}(0,1.7)..controls+(.09,.9)and+(.11,1.1)..(3.3,1.6)
			..controls+(-.05,-.5)and+(.05,.5)..(3.3,.4)..controls+(-.11,-1.1)and+(-.09,-.9)..(0,.3);
		\draw(.5,.2)..controls+(.05,.5)and+(-.05,-.5)..node[midsmx,pos=.3]{$#1$}(.5,1.8)..controls+(.05,.5)and+(.09,.9)..(2.9,1.7)
			..controls+(-.05,-.5)and+(.05,.5)..(2.9,.3)..controls+(-.09,-.9)and+(-.05,-.5)..(.5,.2);
		\draw(2,.3)..controls+(.05,.5)and+(-.05,-.5)..node[midsmx,pos=.3]{$#1$}(2,1.5)..controls+(.05,.5)and+(.05,.5)..(2.5,1.5)
			..controls+(-.05,-.5)and+(.05,.5)..(2.5,.3)..controls+(-.05,-.5)and+(-.05,-.5)..(2,.3);
		\filldraw[fill=white](-.2,1.1)rectangle+(2.4,.4);
		\node[nlabel]at(1,1.3){$#2$};
	}}
	
% part of the characteristic equation... one open strand; otherwise closed up.. here use n different matrices
\def\tcharc(#1,#2){\tikz[heighttwo,xscale=1.7]{
		\foreach\xa/\xb in{1/.5,1.5/.5,1/1.9,1.5/1.9}{\node[nlabel,scale=1.5]at(\xa,\xb){.};}
		\draw(0,-.3)..controls+(.05,.5)and+(-.05,-.5)..node[midsmx,pos=.37]{$#1_1$}(0,2.3);
		\draw(.5,.1)..controls+(.05,.5)and+(-.05,-.5)..node[midsmx,pos=.3]{$#1_2$}(.5,1.9)..controls+(.05,.5)and+(.09,.9)..(2.8,1.8)
			..controls+(-.05,-.5)and+(.05,.5)..(2.8,.2)..controls+(-.09,-.9)and+(-.05,-.5)..(.5,.1);
		\draw(2,.3)..controls+(.05,.5)and+(-.05,-.5)..node[midsmx,pos=.3]{$#1_n$}(2,1.5)..controls+(.05,.5)and+(.05,.5)..(2.5,1.5)
			..controls+(-.05,-.5)and+(.05,.5)..(2.5,.3)..controls+(-.05,-.5)and+(-.05,-.5)..(2,.3);
		\filldraw[fill=white](-.2,1.1)rectangle+(2.4,.4);
		\node[nlabel]at(1,1.3){$#2$};
	}}
\def\tchard(#1,#2){\tikz[heighttwo,xscale=1.7]{
		\draw(-.5,-.3)..controls+(.05,.5)and+(-.05,-.5)..(-.5,2.3);
		\foreach\xa/\xb in{1/.5,1.5/.5,1/1.9,1.5/1.9}{\node[nlabel,scale=1.5]at(\xa,\xb){.};}
		\draw(0,.3)..controls+(.05,.5)and+(-.05,-.5)..node[midsmx,pos=.28]{$#1_1$}(0,1.7)..controls+(.09,.9)and+(.11,1.1)..(3.3,1.6)
			..controls+(-.05,-.5)and+(.05,.5)..(3.3,.4)..controls+(-.11,-1.1)and+(-.09,-.9)..(0,.3);
		\draw(.5,.2)..controls+(.05,.5)and+(-.05,-.5)..node[midsmx,pos=.3]{$#1_2$}(.5,1.8)..controls+(.05,.5)and+(.09,.9)..(2.9,1.7)
			..controls+(-.05,-.5)and+(.05,.5)..(2.9,.3)..controls+(-.09,-.9)and+(-.05,-.5)..(.5,.2);
		\draw(2,.3)..controls+(.05,.5)and+(-.05,-.5)..node[midsmx,pos=.3]{$#1_n$}(2,1.5)..controls+(.05,.5)and+(.05,.5)..(2.5,1.5)
			..controls+(-.05,-.5)and+(.05,.5)..(2.5,.3)..controls+(-.05,-.5)and+(-.05,-.5)..(2,.3);
		\filldraw[fill=white](-.2,1.1)rectangle+(2.4,.4);
		\node[nlabel]at(1,1.3){$#2$};
	}}

%%%%%% ======================================================= %%%%%%%
%%%%%% ||   DETERMINANT-TYPE BUBBLES, ADJUGATES, ETC.		    || %%%%%%%
%%%%%% ======================================================= %%%%%%%

\def\detnn(#1){\tikz[heighttwo]{
	\cilur<(0,0)>(-.5,-.2)\cilur<(0,2)>(-.5,.2)
	\foreach\xa in{-2,-1,2}{\hpathmxa(0,0,\xa)(0,2,\xa)(#1)}
	\foreach\xa in{0,1}{\hpathph(0,0,\xa)(0,2,\xa)}
}}

\def\dethalfn(#1,#2,#3){\tikz[heighttwo,xscale=1.2]{
	\cilur<(0,0)>(-.5,-.2)\cilur<(0,2)>(-.5,.2)
	\foreach\xa in{-3,-.5}{\hpathmxa(0,0,\xa)(0,2,\xa)(#1)}
	\foreach\xa in{-2,-1.5}{\hpathph(0,0,\xa)(0,2,\xa)}\draw(-1.25,1.25)node[slabel]{$#2$};
	\foreach\xa in{.5,3}{\hpatha(0,0,\xa)(0,2,\xa)}
	\foreach\xa in{1.5,2}{\hpathph(0,0,\xa)(0,2,\xa)}\draw(1.25,1.25)node[slabel]{$#3$};	
}}

\def\detsplitn(#1,#2,#3,#4){\tikz[heighttwo,xscale=1.2]{
	\cilur<(0,0)>(-.5,-.2)\cilur<(0,2)>(-.5,.2)
	\foreach\xa in{-3,-.5}{\hpathmxa(0,0,\xa)(0,2,\xa)(#1)}
	\foreach\xa in{-2,-1.5}{\hpathph(0,0,\xa)(0,2,\xa)}\draw(-1.25,1.25)node[slabel]{$#2$};
	\foreach\xa in{.5,3}{\hpathmxa(0,0,\xa)(0,2,\xa)(#3)}
	\foreach\xa in{1.5,2}{\hpathph(0,0,\xa)(0,2,\xa)}\draw(1.25,1.25)node[slabel]{$#4$};	
}}

\def\detnnb(#1,#2){\tikz[heightthree]{
	\cilur<(0,1)>(-.5,-.2)\cilur<(0,3)>(-.5,.2)
	\foreach\xa in{-2,-1,2}{\hpathmxar(0,1,\xa)(0,3,\xa)(#1)}
	\foreach\xa in{0,1}{\hpathph(0,1,\xa)(0,3,\xa)}
	\hpathmxar(0,1,0)(0,-.5,0)(#2)
	\hpatha(0,3,0)(0,4,0)
}}

\def\vdetbv(#1,#2,#3,#4){\tikz[heightthree]{
	\cilur<(0,1)>(-.5,-.2)\cilur<(0,3)>(-.5,.2)
	\foreach\xa in{-2,-1,2}{\hpathmxa(0,3,\xa)(0,1,\xa)(#1)}
	\foreach\xa in{0,1}{\hpathph(0,1,\xa)(0,3,\xa)}
	\draw(0,-.5)node[svector]{$#4$}--node[midsmx,pos=.6]{$#2$}(0,1);
	\hpathav(0,3,0)(0,4,0)(#3)
}}

\def\adjnn(#1){\tikz[heightthree]{
	\cilur<(0,1)>(-.5,-.2)\cilur<(0,3)>(-.5,.2)
	\foreach\xa in{-2,-1,2}{\hpathmxar(0,1,\xa)(0,3,\xa)(#1)}
	\foreach\xa in{0,1}{\hpathph(0,1,\xa)(0,3,\xa)}
	\hpatha(0,.3,0)(0,1,0)
	\hpatha(0,3,0)(0,3.7,0)
}}

\def\adjhalfn(#1,#2,#3){\tikz[heightthree]{
	\cilur<(0,1)>(-.5,-.2)\cilur<(0,3)>(-.5,.2)
	\foreach\xa in{-3,-.5}{\hpathmxa(0,1,\xa)(0,3,\xa)(#1)}
	\foreach\xa in{-2,-1.5}{\hpathph(0,1,\xa)(0,3,\xa)}\draw(-1.25,1.75)node[slabel]{$#2$};
	\foreach\xa in{.5,3}{\hpatha(0,1,\xa)(0,3,\xa)}
	\foreach\xa in{1.5,2}{\hpathph(0,1,\xa)(0,3,\xa)}\draw(1.25,1.75)node[slabel]{$#3$};	
	\hpatha(0,.3,0)(0,1,0)
	\hpatha(0,3,0)(0,3.7,0)
}}

\def\vadjv(#1,#2,#3){\tikz[heightthree]{
	\cilur<(0,1)>(-.5,-.2)\cilur<(0,3)>(-.5,.2)
	\foreach\xa in{-2,-1,2}{\hpathmxa(0,3,\xa)(0,1,\xa)(#1)}
	\foreach\xa in{0,1}{\hpathph(0,1,\xa)(0,3,\xa)}
	\hpathva(0,-.2,0)(0,1,0)(#3)
	\hpathav(0,3,0)(0,4.2,0)(#2)
}}

%%%%%% ======================================================= %%%%%%%
%%%%%% ||   NODES WITH MATRICES FOR CANCELLING THROUGH      || %%%%%%%
%%%%%% ======================================================= %%%%%%%

% matrices (#3) on all sides
\def\mxnodea(#1,#2,#3){\tikz[heighttwo,yscale=1.3]{
	\cilr<(0,1)>(-.8)
	\foreach\xa in{-2,-1,2}{\hpathmxa(0,1,\xa)(\xa,0,0)(#3)}
	\foreach\xa in{0,1}{\hpathph(0,1,\xa)(\xa,0,0)}
	\foreach\xa in{-2,1,2}{\hpathmxa(0,1,\xa)(\xa,2,0)(#3)}
	\foreach\xa in{-1,0}{\hpathph(0,1,\xa)(\xa,2,0)}
	\draw(-.3,1.75)node[slabel]{$#1$};
	\draw(.3,.25)node[slabel]{$#2$};
}}
\def\mxnodeb{
}
\def\mxnodec{
}
\def\mxndoed{
}

%%%%%% ======================================================= %%%%%%%
%%%%%% ||   MATRICES AND J-NULLIFIERS (ORIENTED!!)          || %%%%%%%
%%%%%% ======================================================= %%%%%%%

\def\upon(#1){\tikz{
	\draw(0,0)--(0,1)node[pos=.9,coordinate](A){}node[pos=.91,coordinate](B){};
	\draw[oriented](A)--(B);
	\draw[nullifier](-.15,.5)--(.15,.5)node[nlabel,right,scale=.7]{$#1$};
}}
\def\uporn(#1){\tikz[shift={(0,1)},yscale=-1]{
	\draw(0,0)--(0,1)node[pos=.9,coordinate](A){}node[pos=.91,coordinate](B){};
	\draw[oriented](A)--(B);
	\draw[nullifier](-.15,.5)--(.15,.5)node[nlabel,right,scale=.7]{$#1$};
}}
\def\upornm(#1,#2){\tikz[scale=1.3]{
	\draw(0,1)--(0,0)node[midsmx,pos=.5]{$#1$}node[pos=.9,coordinate](A){}node[pos=.91,coordinate](B){};
	\draw[oriented](A)--(B);
	\draw[nullifier](-.15,.85)--(.15,.85)node[nlabel,right,scale=.7]{$#2$};
}}
\def\uponm(#1,#2){\tikz[shift={(0,1)},yscale=-1,scale=1.3]{
	\draw(0,1)--(0,0)node[midsmx,pos=.5]{$#1$}node[pos=.25,coordinate](A){}node[pos=.26,coordinate](B){};
	\draw[oriented](A)--(B);
	\draw[nullifier](-.15,.15)--(.15,.15)node[nlabel,right,scale=.7]{$#2$};
}}

\def\detj(#1){\tikz[heighttwo,scale=1.3]{
	\cilur<(0,1.2)>(-.5,.2)
	\hpathva(0,2,0)(0,1.2,0)(#1)
	\foreach\xa in{-1.5,-.3,1.5}{\hpatha(\xa,0,0)(0,1.2,\xa)}
	\foreach\xa in{-1.1,-.7,.3,.9}{\hpathph(\xa,0,0)(0,1.2,\xa)}
}}

\def\detjj(#1){\tikz[heighttwo,scale=1.3]{
	\cilur<(0,1.2)>(-.5,.2)
	\hpathva(0,2,0)(0,1.2,0)(#1)
	\foreach\xa in{-1.5,-.3,1.5}{\hpatha(\xa,0,0)(0,1.2,\xa)}
	\foreach\xa in{-1.1,-.7,.3,.9}{\hpathph(\xa,0,0)(0,1.2,\xa)}
	\draw[nullifier](-.4,.8)--(-.1,.8)node[nlabel,right,scale=.7]{$#1$};
}}

\def\detjaj(#1,#2){\tikz[heighttwo,scale=1.3]{
	\cilur<(0,1.2)>(-.5,.2)
	\hpathva(0,2,0)(0,1.2,0)(#2)
	\foreach\xa in{-1.5,1.5}{\hpatha(\xa,0,0)(0,1.2,\xa)}
	\hpathmxa(-.3,0,0)(0,1.2,-.3)(#1)
	\foreach\xa in{-1.2,-.9,.5,1}{\hpathph(\xa,0,0)(0,1.2,\xa)}
}}

\def\detjlab(#1,#2,#3,#4){\tikz[heighttwo,scale=1.3]{
	\cilur<(0,1.2)>(-.5,.2)
	\hpathva(0,2,0)(0,1.2,0)(#3)
	\foreach\xa/\xb in{-1.5/#1,-.3/#2,1.5/#4}{\hpathva(\xa,0,0)(0,1.2,\xa)(\xb)}
	\foreach\xa in{-1.2,-.9,.5,1}{\hpathph(\xa,0,0)(0,1.2,\xa)}
}}

\def\detjlabb(#1,#2,#3,#4){\tikz[heighttwo,scale=1.3]{
	\cilur<(0,1.2)>(-.5,.2)
	\hpathva(.05,.6,0)(.05,1.2,0)(#3)
	\foreach\xa/\xb in{-1.5/#1,-.3/#2,1.5/#4}{\hpathva(\xa,0,0)(0,1.2,\xa)(\xb)}
	\foreach\xa in{-1.1,-.7,.5,1}{\hpathph(\xa,0,0)(0,1.2,\xa)}
}}

%%%%%% ===================================================== %%%%%%%
%%%%%% ||     COMMANDS FOR SPECIFIC DIAGRAMS ONLY         || %%%%%%%
%%%%%% ===================================================== %%%%%%%

% identity
\def\pup{\:\tikz[xscale=.7]{\tupp<(0,0)>{1}{1}}\:}
% cross
\def\px{\perm(0/1,1/0)}
% identity
\def\pid{\tikz[xscale=.7]{\tupp<(0,0)>{2}{1}}}
% capcup
\def\pcc{\drawtl(0/1)(0/1)()}
% binor identity
\def\binor{\px=\pid-\pcc}

% ups
\def\upa{\tikz{\draw(0,0)..controls+(.1,.5)and+(-.1,-.5)..(0,1)node[coordinate,pos=.55](A){}node[coordinate,pos=.56](B){};\draw[oriented](A)--(B);}}
\def\downa{\tikz{\draw(0,0)..controls+(.1,.5)and+(-.1,-.5)..(0,1)node[coordinate,pos=.55](A){}node[coordinate,pos=.56](B){};\draw[oriented](B)--(A);}}
\def\upcl{\tikz{\draw(0,0)..controls+(.1,.5)and+(-.1,-.5)..node[midedge](N){}(0,1);\cilr<(N)>(-.3)}}
\def\upcr{\tikz{\draw(0,0)..controls+(.1,.5)and+(-.1,-.5)..node[midedge](N){}(0,1);\cilr<(N)>(.3)}}
		
% cups
\def\cupp{\tikz{
	\draw(0,.7)..controls+(-.01,-.1)and+(.01,.1)..(0,.6)
		..controls+(-.04,-.4)and+(-.04,-.4)..(1,.6)..controls+(.01,.1)and+(-.01,-.1)..(1,.7);
	}}
\def\cupdot{\tikz{
	\draw(0,.7)..controls+(-.01,-.1)and+(.01,.1)..(0,.6)
		..controls+(-.04,-.4)and+(-.04,-.4)..node[dotedge](N){}(1,.6)..controls+(.01,.1)and+(-.01,-.1)..(1,.7);
	}}
\def\cupcu{\tikz{
	\draw(0,.7)..controls+(-.01,-.1)and+(.01,.1)..(0,.6)
		..controls+(-.04,-.4)and+(-.04,-.4)..node[midedge](N){}(1,.6)..controls+(.01,.1)and+(-.01,-.1)..(1,.7);
		\cilu<(N)>(.2)
	}}
\def\cupcd{\tikz{
	\draw(0,.7)..controls+(-.01,-.1)and+(.01,.1)..(0,.6)
		..controls+(-.04,-.4)and+(-.04,-.4)..node[midedge](N){}(1,.6)..controls+(.01,.1)and+(-.01,-.1)..(1,.7);
		\cilu<(N)>(-.2)
	}}
\def\cupcl{
	\tikz{\draw(0,.7)..controls+(-.01,-.1)and+(.01,.1)..(0,.5)
		..controls+(-.04,-.4)and+(-.04,-.4)..node[dotedge,pos=0](N){}(1,.5)..controls+(.01,.1)and+(-.01,-.1)..(1,.7);
		\cilr<(N)>(-.3)
	}}
\def\cupa{\tikz{\hpatha(-.5,.7,0)(0,.2,-.5)\hpathar(0,.2,.5)(.5,.7,0)}}
\def\cupar{\tikz{\hpatha(0,.2,-.5)(-.5,.7,0)\hpathar(.5,.7,0)(0,.2,.5)}}
		
% caps
\def\capp{\tikz{\draw(0,.3)..controls+(.01,.1)and+(-.01,-.1)..(0,.4)
		..controls+(.04,.4)and+(.04,.4)..(1,.4)..controls+(-.01,-.1)and+(.01,.1)..(1,.3);
	}}
\def\capdot{\tikz{\draw(0,.3)..controls+(.01,.1)and+(-.01,-.1)..(0,.4)
		..controls+(.04,.4)and+(.04,.4)..node[dotedge](N){}(1,.4)..controls+(-.01,-.1)and+(.01,.1)..(1,.3);
	}}
\def\capcu{\tikz{\draw(0,.3)..controls+(.01,.1)and+(-.01,-.1)..(0,.4)
		..controls+(.04,.4)and+(.04,.4)..node[dotedge](N){}(1,.4)..controls+(-.01,-.1)and+(.01,.1)..(1,.3);
		\cilu<(N)>(.2)
	}}
\def\capcd{\tikz{\draw(0,.3)..controls+(.01,.1)and+(-.01,-.1)..(0,.4)
		..controls+(.04,.4)and+(.04,.4)..node[dotedge](N){}(1,.4)..controls+(-.01,-.1)and+(.01,.1)..(1,.3);
		\cilu<(N)>(-.2)
	}}
\def\capa{\tikz{\hpatha(-.5,.2,0)(0,.7,-.5)\hpatha(0,.7,.5)(.5,.2,0)}}
		
% circles
\def\circb{\tikz{\draw(0,.5)..controls+(.05,.5)and+(.05,.5)..(1,.5)
		..controls+(-.05,-.5)and+(-.05,-.5)..(0,.5);
	}}
\def\circs{\tikz{\draw(0,.5)..controls+(.05,.5)and+(.05,.5)..(1,.5)
		..controls+(-.05,-.5)and+(-.05,-.5)..(0,.5);
	}}

%%%%%% ===================================================== %%%%%%%
%%%%%% ||     SPECIFIC DIAGRAMS -- MORE COMPLEX           || %%%%%%%
%%%%%% ===================================================== %%%%%%%

% trace cap/cup combo
\def\cctrl{\tikz[scale=.8,shift={(0,.1)}]{\draw(1.2,.5)..controls+(-.08,-.8)and+(-.05,-.5)..(-.2,.2)(-.2,.8)..controls+(.05,.5)and+(.08,.8)..(1.2,.5);}}
\def\cctrr{\tikz[scale=.8,shift={(0,.1)}]{\draw(-.2,.5)..controls+(-.08,-.8)and+(-.05,-.5)..(1.2,.2)(1.2,.8)..controls+(.05,.5)and+(.08,.8)..(-.2,.5);}}

% dots/kinks/etc.
\def\capcux{\tikz[yscale=.7,xscale=.8,shift={(0,-.2)}]{
		\draw(0,1)..controls+(.05,.8)and+(.05,.8)..(1,1)node[dotedge](N){};
		\cilu<(N)>(.4)
		\tperm<(0,0)>{0/1,1/0}
	}}
\def\kink{\tikz{
		\draw(0,0)..controls+(.02,.2)and+(-.02,-.2)..(0,.5)
			..controls+(.03,.3)and+(.03,.3)..(.5,.5)..controls+(-.03,-.3)and+(-.03,-.3)..(1,.5)
			..controls+(.02,.2)and+(-.02,-.2)..(1,1);}}
\def\kinka{\tikz{
		\draw(0,0)..controls+(.02,.2)and+(-.02,-.2)..(0,.5)node[coordinate,pos=.5](A){}node[coordinate,pos=.51](B){}
			..controls+(.03,.3)and+(.03,.3)..(.5,.5)..controls+(-.03,-.3)and+(-.03,-.3)..(1,.5)
			..controls+(.02,.2)and+(-.02,-.2)..(1,1)node[coordinate,pos=.5](C){}node[coordinate,pos=.51](D){};
		\draw[oriented](A)--(B);
		\draw[oriented](C)--(D);
	}}
\def\kinkdot{\tikz{
		\draw(0,0)..controls+(.02,.2)and+(-.02,-.2)..(0,.5)
			..controls+(.03,.3)and+(.03,.3)..node[dotedge]{}(.5,.5)..controls+(-.03,-.3)and+(-.03,-.3)..node[dotedge]{}(1,.5)
			..controls+(.02,.2)and+(-.02,-.2)..(1,1);}}
\def\ccdot{\tikz{
	\draw(0,0)..controls+(.1,.5)and+(.1,.5)..node[dotedge]{}(1,0);
	\draw(0,1)..controls+(-.1,-.5)and+(-.1,-.5)..node[dotedge]{}(1,1);}}
		
% crossed strands over a minimum
\def\cupsxlr{
	\tikz[heightthree,scale=.8,shift={(0,-.2)}]{\tperm<(0,2)>{0/1,1/0,3/3,4/4}\tcupp<(2,2)>{2}}
	=\tikz[heightthree,scale=.8,shift={(0,-.2)}]{\tperm<(0,2)>{0/1,1/0,3/3,4/4}\tcupp<(2,2)>{2}}}
	
% double ciliation relation
\def\upcll{\tikz{
		\draw(0,0)..controls+(.02,.2)and+(-.02,-.2)..node[dotedge](M){}(0,.5)
			..controls+(.02,.2)and+(-.02,-.2)..node[dotedge](N){}(0,1);
		\cilr<(M)>(-.3)\cilr<(N)>(-.3)}
	=\tikz{\draw(0,0)..controls+(.05,.5)and+(-.05,-.5)..(0,1){};}
}

% triple strand moving relation
\def\triplel{
	\tikz[heighttwo,xscale=.8]{
		\draw
			(0,0)..controls+(.05,.5)and+(-.05,-.5)..(-1,1)..controls+(.05,.5)and+(-.05,-.5)..(0,2)
			(1,0)..controls+(.05,.5)and+(-.05,-.5)..(2,1)..controls+(.03,.3)and+(.03,.3)..node[dotedge](N){}(3,1)
				..controls+(-.06,-.6)and+(.06,.6)..(3,0)
			(4,0)..controls+(.06,.6)and+(-.06,-.6)..(4,1)..controls+(.1,1)and+(.1,1)..node[dotedge](M){}(1,1)
				..controls+(-.03,-.3)and+(-.03,-.3)..node[dotedge](L){}(0,1)..controls+(.05,.5)and+(-.05,-.5)..(1,2);
		\cilu<(L)>(.3)\cilu<(M)>(.3)\cilu<(N)>(.3)}
	=-\tikz[heighttwo,xscale=.8]{
		\draw
			(0,0)..controls+(.08,.8)and+(-.08,-.8)..(2,2)
			(5,0)..controls+(.08,.8)and+(-.08,-.8)..(3,2)
			(1,0)..controls+(.1,1)and+(.1,1)..node[dotedge](N){}(4,0);
		\cilu<(N)>(.4)}
}

%%%%%% ===================================================== %%%%%%%
%%%%%% ||     SPECIFIC DIAGRAMS WITH ARBITRARY # STRANDS  || %%%%%%%
%%%%%% ===================================================== %%%%%%%

% up stuff
\def\upn(#1){\tikz[xscale=.5]{
		\foreach\xa in{0,.5,2}{\draw(\xa,0)..controls+(.1,.5)and+(-.1,-.5)..(\xa,1);}
		\foreach\xa/\xb in{1/.5,1.5/.5}{\node[nlabel,scale=2]at(\xa,\xb){.};}
		\node[nlabel,scale=1.3,right]at(2,.9){$#1$};}}
\def\upcln(#1){\tikz[xscale=.7]{
		\foreach\xa in{0,.5,2}{\draw(\xa,0)..controls+(.1,.5)and+(-.1,-.5)..node[dotedge](N){}(\xa,1);\draw[ciliated](N)--+(-.4,0);}
		\foreach\xa/\xb in{1/.2,1.5/.2,1/.8,1.5/.8}{\node[nlabel,scale=1.5]at(\xa,\xb){.};}
		\node[nlabel,scale=1.3,right]at(2,.9){$#1$};}}

% cups of various sorts
\def\cupcun(#1){
	\tikz[scale=.8,shift={(0,.2)}]{
		\draw(-.5,1)..controls+(-.02,-.2)and+(.02,.2)..(-.5,.7)..controls+(-.05,-.5)and+(-.05,-.5)
			..node[dotedge](M){}(.5,.7)..controls+(.02,.2)and+(-.02,0)..(.5,1);
		\draw(-2,1)..controls+(0,-.1)and+(0,.1)..(-2,.7)..controls+(-.15,-1.5)and+(-.15,-1.5)
			..node[dotedge](N){}(2,.7)..controls+(0,.1)and+(0,-.1)..(2,1)node[symlabelright]{$#1$};
		\cilu<(M)>(.3)\cilu<(N)>(.3)
		\foreach\xa/\xb in{-1.2/0,-.7/.2,1.1/0,.6/.2}{\node[nlabel,scale=2]at(\xa,\xb){.};}
	}}
		
% complete circle
\def\circn(#1){
	\tikz[heightthrees]{
		\ttl<(0,0)>{}{-5/-4}{}\tperm<(0,1)>{-5/-5,-4/-4,-1/-1}\ttl<(0,2)>{-5/-4}{}{}
		\draw(-1,2)..controls+(.2,1.5)and+(.2,1.5)..(-8,2)..controls+(-.05,-.5)and+(.05,.5)..(-8,1)
			..controls+(-.2,-1.5)and+(-.2,-1.5)..(-1,1);
		\filldraw[black](-4.5,1.3)rectangle+(4,.4);
		\foreach\xa/\xb in{-7/1.5,-6/1.5,-3.2/2.3,-2.2/2.4,-3.2/.7,-2.2/.6}{\node[nlabel,scale=1.5]at(\xa,\xb){.};}
		\node[nlabel,right]at(-.5,1.7){$#1$};}}

% kinks etc.
\def\slantn(#1){\tikz{
		\draw(0,0)..controls+(.04,.4)and+(-.04,-.4)..(.9,1);
		\draw(.6,0)..controls+(.04,.4)and+(-.04,-.4)..(1.5,1)node[nlabel,right]{$#1$};
		\foreach\xa/\xb in{.65/.5,.85/.5}{\node[nlabel,scale=1.5]at(\xa,\xb){.};}
}}
\def\kinkn(#1){\tikz{
		\draw(.2,-.1)..controls+(.03,.3)and+(-.03,-.3)..(0,.5)
			..controls+(.04,.4)and+(.04,.4)..(1.2,.6)..controls+(-.01,-.1)and+(-.01,-.1)..(1.5,.6)
			..controls+(.03,.3)and+(-.03,-.3)..(1.3,1.1);
		\draw(.8,-.1)..controls+(.03,.3)and+(-.03,-.3)..(.6,.4)
			..controls+(.01,.1)and+(.01,.1)..(.9,.4)..controls+(-.04,-.4)and+(-.04,-.4)..(2.1,.5)
			..controls+(.03,.3)and+(-.03,-.3)..(1.9,1.1);
		\foreach\xa/\xb in{.3/.2,.5/.2,1.6/.8,1.8/.8,.75/.6,.75/.75,1.35/.4,1.35/.25}{
			\node[nlabel,scale=1.5]at(\xa,\xb){.};}
		\node[nlabel,right]at(1.9,1.1){$#1$};}}

% with symmetrizers
\def\suppn(#1){\tikz[xscale=.5]{
		\foreach\xa in{0,.5,2}{\draw(\xa,0)..controls+(.1,.5)and+(-.1,-.5)..(\xa,1);}
		\filldraw[black](-.3,.3)rectangle(2.3,.7);%
		\draw(1,.5)node[symlabel]{$#1$};
		\foreach\xa/\xb in{1/.1,1.5/.1,1/.9,1.5/.9}{\node[nlabel,scale=1.5]at(\xa,\xb){.};}}}
\def\asuppn(#1){\tikz[xscale=.5]{
		\foreach\xa in{0,.5,2}{\draw(\xa,0)..controls+(.1,.5)and+(-.1,-.5)..(\xa,1);}
		\filldraw[fill=white](-.5,.3)rectangle(2.5,.7);%
		\draw(1,.5)node[asymlabel,scale=.8]{$#1$};
		\foreach\xa/\xb in{1/.1,1.5/.1,1/.9,1.5/.9}{\node[nlabel,scale=1.5]at(\xa,\xb){.};}}}
\def\suppnbig(#1){
	\tikz[heightthrees]{
		\tperm<(0,0)>{0/0,3/3}\tperm<(0,1)>{0/0,3/3}\tperm<(0,2)>{0/0,3/3}
		\filldraw[black](-.5,1.3)rectangle+(4,.4)node[symlabelright,scale=.8]{$#1$};
		\foreach\xa/\xb in{1/.5,2/.5,1/2.5,2/2.5}{\node[nlabel,scale=1.5]at(\xa,\xb){.};}
	}}
		
% draws node w/ all down strands
\def\nodendown(#1){\tikz{
	\cilur<(0,1)>(-.5,.2)
	\foreach\xa in{-1.5,-.75,1.5}{\hpatha(\xa,0,0)(0,1,\xa)}
	\foreach\xa in{0,.75}{\hpathph(\xa,0,0)(0,1,\xa)}
	\draw(.2,.25)node[slabel]{$#1$};
}}		
		
% draws node w/ all down strands
\def\nodenup(#1){\tikz{
	\cilur<(0,0)>(-.5,-.2)
	\foreach\xa in{-1.5,-.75,1.5}{\hpatha(0,0,\xa)(\xa,1,0)}
	\foreach\xa in{0,.75}{\hpathph(0,0,\xa)(\xa,1,0)}
	\draw(.2,.65)node[slabel]{$#1$};
}}		
		
% draws k->n strands
\def\asymnk(#1,#2){\tikz[heighttwo,xscale=.7]{
	\cilr<(0,1)>(-.8)
	\foreach\xa in{-2,-1,2}{\hpatha(\xa,0,0)(0,1,\xa)}
	\foreach\xa in{0,1}{\hpathph(0,1,\xa)(\xa,0,0)}
	\foreach\xa in{-2,1,2}{\hpatha(\xa,2,0)(0,1,\xa)}
	\foreach\xa in{-1,0}{\hpathph(0,1,\xa)(\xa,2,0)}
	\draw(-.3,1.75)node[slabel]{$#1$};
	\draw(.3,.25)node[slabel]{$#2$};
}}
% draws k->n strands with a bunch of cups
\def\asymnkcup(#1,#2){\tikz[heighttwo]{
	\cilur<(0,1)>(-.5,.2)
	\foreach\xa in{-1.5,-1,-.2}{\hpatha(\xa,-.3,0)(0,1,\xa)}
	\foreach\xa in{.3,1.1}{\hpatha(\xa,0,0)(0,1,\xa)}
	\foreach\xa in{-.7,-.5,.6,.8}{\hpathph(\xa,0,0)(0,1,\xa)}
	\draw(.3,0)..controls+(0,-.5)and+(0,-.5)..(2,0);
	\draw(1.1,0)..controls+(0,-.2)and+(0,-.2)..(1.4,0);
	\hpatha(1.2,2,0)(1.4,0,0)\hpatha(1.8,2,0)(2,0,0)
	\hpathph(1.65,0,0)(1.65,2,0)\hpathph(1.75,0,0)(1.75,2,0)
	\draw(-.6,.25)node[slabel]{$#1$};
	\draw(.7,.25)node[slabel]{$#2$};
}}

% draws k->n->k strands with two ciliated nodes
\def\asymnkk(#1,#2,#3){\tikz[heightthree,scale=1.3]{
	\foreach\xa in{-1.5,-.75,1.5}{\hpatha(\xa,0,0)(0,.8,\xa)}
	\foreach\xa in{0,.75}{\hpathph(\xa,0,0)(0,.8,\xa)}
	\cilr<(0,.8)>(-.8)
	\foreach\xa in{-2,1,2}{\hpatha(0,2.2,\xa)(0,.8,\xa)}
	\foreach\xa in{-.75,0}{\hpathph(0,.8,\xa)(0,2.2,\xa)}
	\cilr<(0,2.2)>(-.8)
	\foreach\xa in{-1.5,-.75,1.5}{\hpatha(0,2.2,\xa)(\xa,3,0)}
	\foreach\xa in{0,.75}{\hpathph(0,2.2,\xa)(\xa,3,0)}
	\draw(.3,2.75)node[slabel]{$#1$};
	\draw(-.3,1.25)node[slabel]{$#2$};
	\draw(.3,.05)node[slabel]{$#3$};
}}
% draws 2->n->2 strands with two ciliated nodes
\def\asymnktwo(#1){\tikz[heightthree]{
	\foreach\xa in{-.9,.9}{\hpatha(\xa,0,0)(0,.8,\xa)}
	\cilr<(0,.8)>(-.8)
	\foreach\xa in{-2,1,2}{\hpatha(0,2.2,\xa)(0,.8,\xa)}
	\foreach\xa in{-.75,0}{\hpathph(0,.8,\xa)(0,2.2,\xa)}
	\cilr<(0,2.2)>(-.8)
	\foreach\xa in{-.9,.9}{\hpatha(0,2.2,\xa)(\xa,3,0)}
	\draw(-.3,1.25)node[slabel]{$#1$};
}}
% draws n->0->n strands with two ciliated nodes
\def\asymcapcup(#1){\tikz[scale=.8,heightthree]{
	\foreach\xa in{-1.5,-.75,1.5}{\hpatha(\xa,0,0)(0,1.2,\xa)}
	\foreach\xa in{0,.75}{\hpathph(\xa,0,0)(0,1.2,\xa)}
	\cilr<(0,1.2)>(-.8)
	\cilr<(0,1.8)>(-.8)
	\foreach\xa in{-1.5,-.75,1.5}{\hpatha(0,1.8,\xa)(\xa,3,0)}
	\foreach\xa in{0,.75}{\hpathph(0,1.8,\xa)(\xa,3,0)}
	\draw(.3,2.75)node[slabel]{$#1$};
	\draw(.3,.25)node[slabel]{$#1$};
}}
% draws n->n strands with two ciliated nodes
\def\acircnn(#1){\tikz[heighttwo,scale=.9]{
	\cilur<(0,0)>(-.5,-.2)\cilur<(0,2)>(-.5,.2)
	\foreach\xa in{-2,-1,2}{\hpatha(0,0,\xa)(0,2,\xa)}
	\foreach\xa in{0,1}{\hpathph(0,0,\xa)(0,2,\xa)}
	\draw(.3,1.2)node[slabel]{$#1$};
}}

\def\crossprod(#1,#2){\tikz[heighttwo]{
	\hpathva(-.7,.3,0)(0,1,-.7)(#1)
	\hpathva(.7,.3,0)(0,1,.7)(#2)
	\hpatha(0,1.5,0)(0,1,0)
	%\cilur<(0,1)>(-.5,.1)
}}

\def\innerprod(#1,#2){\tikz[heightone,shift={(0,1)}]{
	\hpathva(-1,0,0)(0,1,-1)(#1)
	\hpathav(0,1,1)(1,0,0)(#2)
}}

\def\crossprodB(#1,#2){\tikz[heighttwo]{
	\draw(.7,0)node[svector]{$#2$}..controls+(.03,.3)and+(-.03,-.3)..(-.7,.9);
	\draw(-.7,0)node[svector]{$#1$}..controls+(.03,.3)and+(-.03,-.3)..(.7,.9);
	\hpatha(-.7,.9,0)(0,1.4,-.7)
	\hpatha(.7,.9,0)(0,1.4,.7)
	\hpatha(0,1.8,0)(0,1.4,0)
	%\cilur<(0,1.4)>(-.7,.1)
}}

%%%%%% ===================================================== %%%%%%%
%%%%%% ||     DIAGRAM COMMANDS WITH BASIC ARGUMENTS       || %%%%%%%
%%%%%% ===================================================== %%%%%%%

% basic collections of up strands
\def\upp(#1){\tikz[xscale=.7]{\tupp<(0,0)>{#1}{1}}}
\def\supp(#1){\tikz[xscale=.7]{\tsupp<(0,0)>{#1}{1}}}
\def\asupp(#1){\tikz[xscale=.7]{\tasupp<(0,0)>{#1}{1}}}

%%%%%% ===================================================== %%%%%%%
%%%%%% ||     DIAGRAMS GIVEN BY HIGH-LEVEL COMMANDS       || %%%%%%%
%%%%%% ===================================================== %%%%%%%

% permutation, e.g. {1/2,2/3,3/1} is (1,2,3)->(2,3,1)
\def\perm(#1){\tikz[xscale=.7]{\tperm<(0,0)>{#1}}}
% labeled permutation (by number in #1)
\def\lperm(#1){\tikz[xscale=.7]{\tlperm<(0,0)>{#1}}}

% temperley-lieb element, args={caps}{cups}{ups}
\def\drawtl(#1)(#2)(#3){\tikz[xscale=.7]{\ttl<(0,0)>{#1}{#2}{#3}}}

%%%%%% ===================================================== %%%%%%%
%%%%%% ||     COMMANDS FOR VERY SPECIALIZED DIAGRAMS      || %%%%%%%
%%%%%% ===================================================== %%%%%%%

%%% Diagram: Large example for decomposition (t=trace diagram, ex=example, c=ciliated)
\def\tbigexc{
	\tikz[heightthree,xscale=1.3,yscale=1.1]{
		\draw(0,0)..controls+(.03,.3)and+(-.03,-.3)..(0,1)..controls+(.03,.3)and+(-.03,-.3)..(1,2)
			..controls+(.03,.5)and+(.03,.5)..node[dotedge]{}(2,2)..controls+(-.03,-.4)and+(-.03,-.4)..node[dotedge]{}(3,2)
			..controls+(.03,.3)and+(-.03,.3)..node[dotedge](M){}(3,3);
		\draw(1,0)..controls+(.03,.3)and+(-.03,-.3)..(1,1)..controls+(.03,.3)and+(-.03,-.3)..(0,2)
			..controls+(.03,.3)and+(-.03,-.3)..(0,3);
		\draw(2,0)..controls+(.03,.3)and+(-.03,-.3)..(2,1)..controls+(.03,.4)and+(.03,.4)..node[dotedge]{}(3,1)
			..controls+(-.03,-.3)and+(.03,.3)..(4,0);
		\draw(3,0)..controls+(.03,.3)and+(-.03,-.3)..(4,1)..controls+(.03,.3)and+(-.03,-.3)..node[dotedge](N){}(4,3);
		\cilr<(M)>(.4)\cilr<(N)>(-.4)
	}}
% as above but with dividing lines
\def\tbigexccut{
	\tikz[heightthree,xscale=1.3,yscale=1.1]{
		\draw(0,0)..controls+(.03,.3)and+(-.03,-.3)..(0,1)..controls+(.03,.3)and+(-.03,-.3)..(1,2)
			..controls+(.03,.5)and+(.03,.5)..node[dotedge]{}(2,2)..controls+(-.03,-.4)and+(-.03,-.4)..node[dotedge]{}(3,2)
			..controls+(.03,.3)and+(-.03,.3)..node[dotedge](M){}(3,3);
		\draw(1,0)..controls+(.03,.3)and+(-.03,-.3)..(1,1)..controls+(.03,.3)and+(-.03,-.3)..(0,2)
			..controls+(.03,.3)and+(-.03,-.3)..(0,3);
		\draw(2,0)..controls+(.03,.3)and+(-.03,-.3)..(2,1)..controls+(.03,.4)and+(.03,.4)..node[dotedge]{}(3,1)
			..controls+(-.03,-.3)and+(.03,.3)..(4,0);
		\draw(3,0)..controls+(.03,.3)and+(-.03,-.3)..(4,1)..controls+(.03,.3)and+(-.03,-.3)..(4,2)
			..controls+(.03,.3)and+(-.03,-.3)..node[dotedge](N){}(4,3);
		\cilr<(M)>(.4)\cilr<(N)>(-.4)
		\foreach\xa in{0,1,3}{\draw[green,dotted](-.5,\xa)--(4.5,\xa);}
	}}

%%%%%% ===================================================== %%%%%%%
%%%%%% ||     COMMANDS FOR CILIATED DIAGRAMS				      || %%%%%%%
%%%%%% ===================================================== %%%%%%%

%five-element ciliation example
\def\cilex{
	\tikz[heighttwo,yscale=.7,shift={(0,1.4)}]{
	\cilr<(1,0)>(-.4)
	\foreach\xa in{154,82,10,-62,-134}{
		\draw(1,0)--+(\xa:.7);
	}	
}}
\def\cilexl{
	\tikz[heighttwo,yscale=.7,shift={(0,1.4)}]{
	\cilr<(1,0)>(-.4)
	\foreach\xa/\xb in{154/5,82/4,10/3,-62/2,-134/1}{
		\draw(1,0)--+(\xa:1)node[svector]{\xb};
	}	
}}

%%%%%% ===================================================== %%%%%%%
%%%%%% ||    COMMANDS FOR USE WITHIN A TIKZ ENVIRONMENT   || %%%%%%%
%%%%%% ===================================================== %%%%%%%

% several straight strands, n=#2, height=#3
\def\tupp<#1>#2#3{\foreach\x in{1,...,#2}%
	\draw[shift={#1}]([shift={(-1,0)}]\x,0)..controls+(.1,.5)and+(-.1,-.5)..([shift={(-1,0)}]\x,#3);}

% labeled straight strands, height=#2, matrix labels=#3 (position/label set)
\def\tmupp<#1>#2#3{\foreach\xa/\xb in{#3}{\draw[shift={#1}](\xa,0)..controls+(.05,.5)and+(-.05,-.5)..node[midsmx]{\xb}(\xa,#2);}}

% basis elements... square nodes with a strand coming up or down; #2=label values (numbers)
\def\tbbasis<#1>#2{\foreach\xa/\xb in{#2}{\draw[shift={#1}](\xa,.5)node[svector]{$\xb$}..controls+(.1,.5)and+(-.1,-.5)..(\xa,1);}}
\def\ttbasis<#1>#2{\foreach\xa/\xb in{#2}{\draw[shift={#1}](\xa,0)..controls+(.1,.5)and+(-.1,-.5)..(\xa,.5)node[svector]{$\xb$};}}

% an arbitrary permutation
\def\tperm<#1>#2{\foreach\xa/\xb in{#2}{\draw[shift={#1}](\xa,0)..controls+(.05,.5)and+(-.05,-.5)..(\xb,1);}}
\def\tlperm<#1>#2{\foreach\xa/\xb in{#2}{\draw[shift={#1}](\xa,0)node[svector]{\xa}..controls+(.05,.5)and+(-.05,-.5)..(\xb,1)node[svector]{\xb};}}

% an arbitrary tl element... #2 are down elements, #3 up and #4 bottom to top
\def\ttl<#1>#2#3#4{%
	\foreach\xa/\xb in{#2}{\draw[shift={#1}](\xa,0)..controls+(.1,.5)and+(.1,.5)..(\xb,0);}%
	\foreach\xa/\xb in{#3}{\draw[shift={#1}](\xa,1)..controls+(-.1,-.5)and+(-.1,-.5)..(\xb,1);}%
	\foreach\xa/\xb in{#4}{\draw[shift={#1}](\xa,0)..controls+(.1,.5)and+(-.1,-.5)..(\xb,1);}%
}
	
% draw several nested cups or caps
% the position given is center of the cap (or cup)
\def\tcupp<#1>#2{\foreach\x in{1,...,#2}{\draw[shift={#1}](-\x,0)..controls+(-.05,-\x)and+(-.05,-\x)..(\x,0);}}
\def\tcapn<#1>#2{\foreach\x in{1,...,#2}{\draw[shift={#1}](-\x,0)..controls+(.05,\x)and+(.05,\x)..(\x,0);}}

% code to draw closing loops on a trace diagram... #2 is number of strands, #3 is the height
% the position given is center of the cap
\def\tcloser<#1>#2#3{\tcupp<#1>{#2}\tcapn<([shift={#1}]0,#3)>{#2}\tupp<([shift={#1}]1,0)>{#2}{#3}}

% symmetrizers and anti-symmetrizers... height (#3) and width (#2) parameters
\def\tsupp<#1>#2#3{\tupp<#1>{#2}{#3}\filldraw[shift={#1},black](-.5,.3)rectangle([shift={(-.5,-.3)}]#2,#3);%
			\draw([shift={#1},shift={(-.5,0)},scale=.5]#2,#3)node[symlabel]{$#2$};}
\def\tasupp<#1>#2#3{\tupp<#1>{#2}{#3}\filldraw[shift={#1},fill=white](-.5,.3)rectangle([shift={(-.5,-.3)}]#2,#3);
			\draw([shift={#1},shift={(-.5,0)},scale=.5]#2,#3)node[asymlabel]{$#2$};}		

% draws paths with horizontal curve controls between two points... some below include matrix labels
\def\hpath(#1,#2,#3)(#4,#5,#6){\draw(#1,#2)..controls+(#3,0)and+(#6,0)..(#4,#5);}
\def\hpatha(#1,#2,#3)(#4,#5,#6){\draw(#1,#2)..controls+(#3,0)and+(#6,0)..(#4,#5)node[coordinate,pos=.55](A){}node[coordinate,pos=.56](B){};\draw[oriented](A)--(B);}
\def\hpathar(#1,#2,#3)(#4,#5,#6){\draw(#1,#2)..controls+(#3,0)and+(#6,0)..(#4,#5)node[coordinate,pos=.44](A){}node[coordinate,pos=.45](B){};\draw[oriented](A)--(B);}
\def\hpathph(#1,#2,#3)(#4,#5,#6){\draw[draw=none](#1,#2)..controls+(#3,0)and+(#6,0)..(#4,#5)node[pos=.5]{.};}
\def\hpathmx(#1,#2,#3)(#4,#5,#6)(#7){\draw(#1,#2)..controls+(#3,0)and+(#6,0)..(#4,#5)node[midsmx]{$#7$};}
\def\hpathmxa(#1,#2,#3)(#4,#5,#6)(#7){\draw(#1,#2)..controls+(#3,0)and+(#6,0)..(#4,#5)node[coordinate,pos=.75](A){}node[coordinate,pos=.76](B){}node[midsmx,pos=.45]{$#7$};\draw[oriented](A)--(B);}
\def\hpathmxar(#1,#2,#3)(#4,#5,#6)(#7){\draw(#1,#2)..controls+(#3,0)and+(#6,0)..(#4,#5)node[coordinate,pos=.25](A){}node[coordinate,pos=.24](B){}node[midsmx,pos=.55]{$#7$};\draw[oriented](A)--(B);}
\def\hpathva(#1,#2,#3)(#4,#5,#6)(#7){\draw(#1,#2)node[svector]{$#7$}..controls+(#3,0)and+(#6,0)..(#4,#5)node[coordinate,pos=.6](A){}node[coordinate,pos=.61](B){};\draw[oriented](A)--(B);}
\def\hpathav(#1,#2,#3)(#4,#5,#6)(#7){\draw(#1,#2)..controls+(#3,0)and+(#6,0)..(#4,#5)node[svector,pos=1]{$#7$}node[coordinate,pos=.5](A){}node[coordinate,pos=.51](B){};\draw[oriented](A)--(B);}
\def\hpathvav(#1,#2,#3)(#4,#5,#6)(#7,#8){\draw(#1,#2)..controls+(#3,0)and+(#6,0)..(#4,#5)node[svector,pos=0]{$#7$}node[svector,pos=1]{$#8$}node[coordinate,pos=.5](A){}node[coordinate,pos=.51](B){};\draw[oriented](A)--(B);}

% draws ciliated vertices in various standard orientations
\def\cilu<#1>(#2){\draw#1node[nodedot]{};\draw[ciliated]#1--+(0,#2);}
\def\cilur<#1>(#2,#3){\draw#1node[nodedot]{};\draw[ciliated]#1--+(#2,#3);}
\def\cilr<#1>(#2){\draw#1node[nodedot]{};\draw[ciliated]#1--+(#2,0);}

% computes optimal tension given parameter
\def\dftn#1{\ifcase{#1}.7\or.78\or.85\or.9\or.95\or1\or1.04\or1.08\or1.11\fi}

% This paper is dedicated to my son Micah.
% None of this would be possible without divine grace.

\title{A Not-so-Characteristic Equation: the Art of Linear Algebra}
\author{Elisha Peterson}
\address{Department of Mathematical Sciences, United States Military Academy, West Point, NY 10996}
\address{{\it E-mail}: {\bf elisha.peterson@usma.edu}}
\date{\today}

\begin{abstract}
Can the cross product be generalized? Why are the trace and determinant so important in matrix theory? What do all the coefficients of the characteristic polynomial represent? This paper describes a technique for `doodling' equations from linear algebra that offers elegant solutions to all these questions. The doodles, known as trace diagrams, are graphs labeled by matrices which have a correspondence to multilinear functions. This correspondence permits computations in linear algebra to be performed using diagrams. The result is an elegant theory from which standard constructions of linear algebra such as the determinant, the trace, the adjugate matrix, Cramer's rule, and the characteristic polynomial arise naturally. Using the diagrams, it is easy to see how little structure gives rise to these various results, as they all can be `traced' back to the definition of the determinant and inner product.
\end{abstract}

\maketitle
%\tableofcontents

%%%%%%%%%%%%%%%%%%%%%%%%%%%%%%%%%%%%%%%%%%%%%%%%%%%%%%%%%%%%%%%%%%%%%%%%%%%%%%%%%%%%%%%%%%%%%%%%%%%%%%%%%%%%
\section{Introduction}
%%%%%%%%%%%%%%%%%%%%%%%%%%%%%%%%%%%%%%%%%%%%%%%%%%%%%%%%%%%%%%%%%%%%%%%%%%%%%%%%%%%%%%%%%%%%%%%%%%%%%%%%%%%%

\arrowsoff

%Diagrams have the power to simplify even the most complex calculations.
%Consider the famous Feynman diagrams, which were introduced to describe particle interactions and subsequently used in the world's most accurate %calculation of a physical constant.
%Many knot invariants cannot be computed without drawing some sort of picture. 
%Yet the vast majority of mathematics is communicated with little or no `graphical support'. We tell our students to always sketch the domain of %integration, but they are never told to sketch the trace of a matrix.

When I was an undergraduate, I remember asking myself: why does the cross product ``only work'' in three dimensions? And what's so special about the trace and determinant of a matrix? What is the \emph{real} reason behind the connection between the cross product and the determinant? These questions have traditional answers, but I never found them satisfying. Perhaps that's because I could not, as a visual learner, ``see'' what these things really meant.

A few years ago, I was delighted to come across the correspondences
	$$\bfu\times\bfv \leftrightarrow \crossprod(\bfu,\bfv)
	\quad\text{and}\quad
	\bfu\cdot\bfv \leftrightarrow \innerprod(\bfu,\bfv)$$
in a book by the physicist G.E. Stedman \cite{stedman90}.
Moreover, there was a way to perform \emph{rigorous} calculations using the diagrams.

The central tool in Stedman's book is the \emph{spin network}, a type of graph labeled by representations of a particular group.
Spin networks are similiar in appearance to Feynman diagrams, but are useful for different kinds of problems.
The first work with diagrams of this sort appears to be \cite{levinson56}, in which spin networks of rank 2 were used as a tool for
investigating quantized angular momenta. The name \emph{spin networks} is due to Roger Penrose, who used them to construct a discrete model of space-time \cite{penrose71}. In modern terminology, a spin network represents a graph whose edges are labeled by representations of a particular group and whose vertices are labeled by \emph{intertwiners} or maps between tensor powers of representations \cite{major99}. Predrag Cvitanovic has showed how to construct the diagrams for any Lie group, and actually found a novel classification of the classical and exceptional Lie groups using this approach \cite{cvitanovic05}.

The applications of spin networks are numerous. They are a standard tool used by physicists for studying various types of particle interactions \cite{cvitanovic05}. Their generalization to quantum groups forms the backbone of skein theory and many $3$-manifold invariants. In combinatorics, they are closely related to chromatic polynomials of graphs, and Louis Kauffman has posed the \emph{Four-Color Theorem} in terms of spin networks \cite{kauffman91}. Spin networks also play a role in geometry. They can be used to characterize the \emph{character variety} of a surface, which encodes the geometric structures which can be placed on the surface \cite{peterson06,lawtonpeterson08,sikora01}. There are indications that they may also be a powerful tool for the study of matrix invariants.

\begin{wrapfigure}{r}{0pt}\label{coolpic}\centering
\includegraphics[width=2.5in]{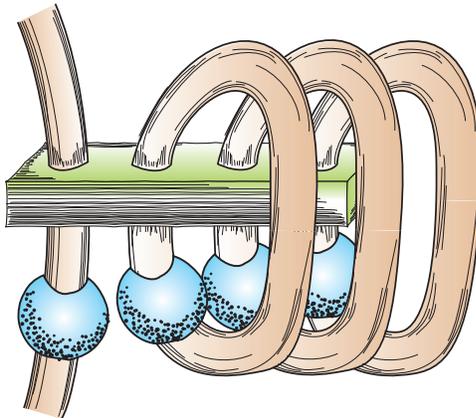}
\caption{Trace diagram which simplifies to $\det(A)I$.}
\end{wrapfigure}

Amidst all these applications, there is a surprising lack of the most basic application of the diagrams: linear algebra. In this paper, spin networks are additionally labeled with matrices and called \emph{trace diagrams} to emphasize this application. Trace diagrams provide simple depictions of traces, determinants, and other linear algebra fare in a way that is mathematically rigorous. The paper concludes with an elegant diagrammatic proof of the Cayley-Hamilton Theorem, which follows directly from the diagram in Figure \ref{coolpic}.

%We will first look at diagrams corresponding to permutations, one of the building blocks of the
%determinant, and hence leading us to a diagram for the determinant of a matrix. We will also find a
%diagram which represents the trace of a matrix. We will then review tensor algebras, which will be
%needed to simplify our diagram further. With just these few tools, we will give the diagrammatic
%form of the adjugate matrix, and of the inverse of a matrix. This, in turn, will be used to prove
%Cramer's Rule, which leads directly to a proof of the Cayley-Hamilton Theorem.

The emphasis of this paper is on \emph{illumination} rather than proof, and in particular on how
diagrammatic techniques have the power to both prove and explain. For this reason, several examples are
included, and more enlightening proofs are preferred. While diagrammatic methods may seem unfamiliar at first, in the end
they offer a profound insight into some of the most fundamental structures of linear algebra, such as
the determinant, the adjugate matrix, and the characteristic equation.

We hope that by the end of the paper the reader is both more comfortable with the
appearance of diagrams in mathematics and convinced that sometimes, in the words of Richard
Feynman, ``these doodles turn out to be useful.''

%%%%%%%%%%%%%%%%%%%%%%%%%%%%%%%%%%%%%%%%%%%%%%%%%%%%%%%%%%%%%%%%%%%%%%%%%%%%%%%%%%%%%%%%%%%%%%%%%%%%%%%%%%%%
\section{Symmetry in Linear Algebra}\label{symmetrysection}
%%%%%%%%%%%%%%%%%%%%%%%%%%%%%%%%%%%%%%%%%%%%%%%%%%%%%%%%%%%%%%%%%%%%%%%%%%%%%%%%%%%%%%%%%%%%%%%%%%%%%%%%%%%%

Perhaps the greatest contribution of diagrams is there ability to capture symmetries inherent in linear algebra.
This section reviews some of those symmetries as well as multilinear or \emph{tensor} algebra, the algebraic framework used to
interpret diagrams.

The inner product is an example of a \emph{symmetric} (or \emph{commutative}) function, since $\bfu\cdot\bfv=\bfv\cdot\bfu$. Similarly, the cross product is 
\emph{anti-symmetric} (or \emph{anti-commutative}) as $\bfu\times\bfv=-\bfv\times\bfu$. Both functions are \emph{multilinear}, since they are linear in each factor.
For example:
	$$(\bfu+\lambda\bfv)\times\bfw=\bfu\times\bfw+\lambda(\bfv\times\bfw).$$
	
Multilinear functions on vector spaces can \emph{always} be considered as functions on tensor products.
Informally, a \emph{tensor product} of two complex vector spaces consists of finite sums of vector pairs subject to the relation
	$$(\lambda\bfu,\bfv)=\lambda(\bfu,\bfv)=(\bfu,\lambda\bfv)$$
for $\lambda\in\C$. The corresponding element is usually written $\bfu\tensor\bfv$.
If $V=\C^3$, then the domain of both $\cdot$ and $\times$ can be taken to be $V\tensor V$, so that $\cdot:V\tensor V\to\C$ and $\times:V\tensor V\to V$. For a rigorous construction of tensor products, see Appendix B in \cite{fultonharris91}.

As another example, the determinant function can be written as a function $\det:V\tensor V\tensor V\to\C$, with $\bfu\tensor\bfv\tensor\bfw\mapsto\det[\bfu\spc \bfv\spc \bfw]$. The multilinearity of the determinant is usually described as being able to factor the multiplication of a matrix column by a constant outside a determinant. Since switching columns in a matrix introduces a sign on the determinant, the determinant is an \emph{anti-symmetric} function.

%%%%%%%%%%%%%%%%%%%%%%%%%%%%%%%%%%%%%%%%%%%%%%%%%%%%%%%%%%%%%%%%%%%%%%%%%%%%%%%%%%%%%%%%%%%%%%%%%%%%%%%%%%%%
\section{A Taste of Trace}
%%%%%%%%%%%%%%%%%%%%%%%%%%%%%%%%%%%%%%%%%%%%%%%%%%%%%%%%%%%%%%%%%%%%%%%%%%%%%%%%%%%%%%%%%%%%%%%%%%%%%%%%%%%%

This section presents \emph{3-Vector Diagrams} and \emph{Trace Diagrams} informally as heuristics to aid in calculations.
The ideas introduced here will be made rigorous in the next section.

Consider again the correspondences
	$$\bfu\times\bfv \leftrightarrow \crossprod(\bfu,\bfv)
		\quad\text{and}\quad
		\bfu\cdot\bfv \leftrightarrow \innerprod(\bfu,\bfv).$$
These diagrams are read ``bottom to top'' so that the inputs $\bfu$ and $\bfv$ occur along the bottom, and the output(s) occur along the top. In the case of the cross product, a single output strand implies a \emph{vector} output; in the case of the inner product, the absence of an input strand implies a \emph{scalar} output.

The notation permits an alternate expression of standard vector identities:		
\begin{example}
	Draw the identity
	$$(\bfu\times\bfv)\cdot(\bfw\times\bfx)=(\bfu\cdot\bfw)(\bfv\cdot\bfx)-(\bfu\cdot\bfx)(\bfv\cdot\bfw).$$
\begin{soln}
Keeping the vector inputs in the same order, the diagram is:
	$$
	\tikz[heighttwo]{
		\hpathav(-1,1,-.5)(-1.5,0,0)(\bfu)\hpathav(-1,1,.5)(-.5,0,0)(\bfv)
		\hpathva(.5,0,0)(1,1,-.5)(\bfw)\hpathva(1.5,0,0)(1,1,.5)(\bfx)
		\hpatha(-1,1,0)(0,1.7,-1)\hpatha(0,1.7,1)(1,1,0)
	}
	=
	\tikz[heighttwo]{
		\hpathav(-.5,1,-1)(-1.5,0,0)(\bfu)\hpathav(.5,1,-1)(-.5,0,0)(\bfv)
		\hpathva(.5,0,0)(-.5,1,1)(\bfw)\hpathva(1.5,0,0)(.5,1,1)(\bfx)
	}
	-
	\tikz[heighttwo]{
		\hpathav(0,1.5,-1.5)(-1.5,0,0)(\bfu)\hpathav(0,.8,-.5)(-.5,0,0)(\bfv)
		\hpathva(.5,0,0)(0,.8,.5)(\bfw)\hpathva(1.5,0,0)(0,1.5,1.5)(\bfx)
	}.
	$$
\end{soln}
\end{example}

\begin{exercise}
	What vector identity does this diagram represent?
		$$\triplev(\bfu,\bfv,\bfw)$$
	The reader is encouraged to guess the meaning of the fourth term (later described in Definition \ref{tddef}).
\end{exercise}

Now suppose that matrix and vector multiplication could also be encoded diagrammatically, according to the rules
	$$ABC \leftrightarrow \iml(ABC)=\immm(A,B,C)
	\quad\text{and}\quad
	\bfv^T A\bfw \leftrightarrow \ivmv(\bfv,A,\bfw).$$
Then matrix elements may be represented diagrammatically as well. If the standard row and column bases for $\C^n$
are represented by $\{\bse^i\}$ and $\{\bse_j\}$, respectively, then
	$$a_{ij}=\bse^i A\bse_j \leftrightarrow \ivmv(i,A,j).$$
Using this notation, trace and determinant diagrams may also be constructed.

\begin{example}
	Find a diagrammatic representation of the trace $\tr(A)$.
\begin{soln}
	$$\tr(A) \leftrightarrow \sum_{i=1}^n \ivmv(i,A,i).$$
\end{soln}
\end{example}

\begin{example}
	Find a diagrammatic representation of the determinant
	\begin{equation}\label{detformula}
		\det(A)=\sum_{\sigma\in\Sigma_n}\sgn(\sigma)a_{1\sigma(1)}a_{2\sigma(2)}\cdots a_{n\sigma(n)}.
	\end{equation}
\begin{soln}
	One approach is to introduce new notation:
	$$\det(A)\leftrightarrow\sum_{\sigma\in\Sigma_n}\sgn(\sigma)\tpermvmv(A,\sigma,1,2,n),$$
	where $\asuppn(\sigma)$ represents a permutation on the $n$ strands.
	For example, if $n=2$, then 
		$$\det(A) \leftrightarrow \tpermxel(1/A/1,2/A/2)-\tpermxel(1/A/2,2/A/1).$$
\end{soln}
\end{example}

%%%%%%%%%%%%%%%%%%%%%%%%%%%%%%%%%%%%%%%%%%%%%%%%%%%%%%%%%%%%%%%%%%%%%%%%%%%%%%%%%%%%%%%%%%%%%%%%%%%%%%%%%%%%
\section{Topological Invariance: No Rules, Just Right}
%%%%%%%%%%%%%%%%%%%%%%%%%%%%%%%%%%%%%%%%%%%%%%%%%%%%%%%%%%%%%%%%%%%%%%%%%%%%%%%%%%%%%%%%%%%%%%%%%%%%%%%%%%%%

What if there were a set of rules for manipulating the diagrams that was compatible with their interpretations as functions?
This section exhibits just such a correspondence, which exists \emph{provided the graphs are given a little extra structure}.
Several classical proofs become trivial in this context, as the complexity is funneled from the proof into the construction of a diagram.

The diagrams will essentially be given the structure of a graph whose vertices have degree 1 or $n$ only, and whose edges are labelled by matrices. But there are a few difficulties with ensuring a diagram's function is well-defined. First,
		\begin{equation}\label{capproblem}
			\capmr(A)=\bfu\cdot(A\bfv)=(A^T\bfu)\cdot\bfv
			\quad\text{but}\quad
			\capml(A)=(A\bfu)\cdot\bfv.
		\end{equation}
The problem here is that $\capmr(A)$ and $\capml(A)$ are indistinguishable as graphs, but represent different functions.
This problem will be resolved in Definition \ref{tddef} by requiring the graphs to be \emph{oriented}.

The second problem is
	\begin{equation}\label{crossproblem}
		\crossprod(\bfv,\bfw)=\bfv\tensor\bfw
			\quad\text{but}\quad
			\crossprodB(\bfv,\bfw)=\bfw\tensor\bfv=-\bfv\tensor\bfw.
	\end{equation}
This time, the problem is that the ordering of edges at a vertex matters, so the diagram \emph{must include a way to order the edges adjacent to a vertex}.

\arrowson
\cilson

These two extra pieces of structure are incorporated into the formal definition of a trace diagram:
\begin{defn}\label{tddef}
	An \emph{$n$-trace diagram} is an oriented graph drawn in the plane with edges labeled by $n\times n$ matrices whose vertices
	(i) have degree 1 or $n$ only,
	(ii) are sources or sinks, and
	(iii) are labeled by an ordering of edges incident to the vertex.
	
	If there are no degree 1 vertices, the trace diagram is said to be \emph{closed}.
	Otherwise, the degree 1 vertices are divided into ordered sets of \emph{inputs} and \emph{outputs}.
	These extra structures determine equivalence of the diagrams under isotopy.
	By convention, trace diagrams are typically drawn with input vertices at the bottom of the diagram and output vertices at the top.
\end{defn}
A simple way to maintain the required ordering is to draw a short mark between two edges at the vertex, called a \emph{ciliation}. For example, the ciliation on $\cilex$ implies the ordering $\cilexl$. Diagrams are unchanged under any isotopy which preserves the order of the inputs and outputs as well as the position of the ciliations. Thus
	$$\triplev(\bfu,\bfv,\bfw)$$
holds automatically since the diagrams are isotopic.

The functional interpretation of a diagram is described in the following theorem:
\begin{thm}[Fundamental Theorem of Trace Diagrams]\label{decomptheorem}
	Let $V=\C^n$.
	There is a well-defined correspondence between trace diagrams with $k$ inputs and $l$ outputs and functions $V^{\tensor k}\to V^{\tensor l}$.
	In particular, every decomposition of a trace diagram into the following basic maps gives the same function (or scalar if the diagram is closed):
	\begin{align*}
		\upa:\bfv																				&\longmapsto \bfv;\\
		\downa:\bfv^T																		&\longmapsto \bfv^T;\\
		\ima(A):\bfv																		&\longmapsto A\cdot\bfv;\\
		\cupa:1																					&\longmapsto \sum_{i=1}^n\bse^i\tensor\bse_i;\\
		\capa:\bfv\tensor\bfw^T											 		&\longmapsto \bfv\cdot\bfw;\\
		\nodendown(n):\bfv_1\tensor\cdots\tensor\bfv_n 	&\longmapsto \det[\bfv_1\spc \cdots\spc \bfv_n].
	\end{align*}
	In addition, the $n$-vertex with opposite orientation represents the determinant of several row vectors.
\end{thm}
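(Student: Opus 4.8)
The plan is to define the correspondence by cutting a diagram into horizontal layers and then to show that the resulting function depends on no choices. Given an $n$-trace diagram $D$ with its ordered inputs along the bottom and outputs along the top, I would first put $D$ in general position with respect to the vertical coordinate: no two vertices or turning points of strands occur at the same height, and every $n$-valent vertex is a local extremum of its incident strands --- possible because condition~(ii) of Definition~\ref{tddef} forces it to be a sink or a source. Slicing $D$ along horizontal lines strictly between consecutive special heights writes it as a vertical composite of layers, each a side-by-side juxtaposition of vertical strands \upa, \downa and exactly one of the remaining basic pieces \ima(A), \cupa, \capa, \nodendown(n). Assigning to each basic piece the function in the statement, to a layer the tensor product of its pieces' functions, and to $D$ the composite of the layer functions read bottom to top, one obtains a linear map $\vprod k\to\vprod l$ (a scalar when $D$ is closed); this assignment visibly respects stacking and side-by-side placement of subdiagrams, so once well-definedness is in hand it is automatically a monoidal functor --- which is exactly why ``every decomposition gives the same function'' will follow. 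Two consistency checks are what the extra structure was introduced for: the orientation tells apart the two caps of \eqref{capproblem} (\capml(A) and \capmr(A), indistinguishable as unoriented graphs, carry different functions), and the ciliation fixes the cyclic order of the $n$ strands at a vertex, so that $\det[\bfv_1\,\cdots\,\bfv_n]$ in \nodendown(n) is unambiguous despite \eqref{crossproblem}. For the closing sentence: reversing all orientations at an $n$-vertex replaces each $V$ by its dual, turning the vertex into a map $(V^*)^{\tensor n}\to\C$ that sends row vectors $\bfw_1^T,\dots,\bfw_n^T$ to the determinant of the matrix with those rows, which agrees with the column version because $\det(M)=\det(M^T)$.

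The substance of the theorem is that this value is unchanged under (i)~choosing a different general-position representative of $D$ and (ii)~ambient isotopy preserving the orderings of inputs, outputs and ciliations. Any two such diagrams are joined by a finite chain of \emph{elementary moves}, so it is enough to check that each one fixes the function. These are: (a)~\emph{interchange} --- sliding two features to different relative heights --- which is the identity $(f\tensor\Id)\circ(\Id\tensor g)=f\tensor g=(\Id\tensor g)\circ(f\tensor\Id)$ built into the tensor product of linear maps; (b)~\emph{straightening} --- cancellation of an adjacent \cupa--\capa pair --- which is the snake identity $(\capa\tensor\Id)\circ(\Id\tensor\cupa)=\Id$, namely the computation $\sum_i(\bfv\cdot\bse_i)\,\bse_i=\bfv$ and its transpose; (c)~\emph{sliding a label} along a strand and around a cup or cap, which is exactly the relation $(A^T\bfu)\cdot\bfv=\bfu\cdot(A\bfv)$ from \eqref{capproblem} together with the fact that consecutive labels on a strand compose to their product; and (d)~the moves local to an $n$-vertex --- moving a label onto or off the vertex, re-bracketing its strands, and transposing two cilia-adjacent edges. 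Only the last is not a topological isotopy: it flips the sign of the value, which is precisely the antisymmetry of $\det$, so that a full cyclic rotation of the $n$ edges multiplies the value by $(-1)^{n-1}$, the sign of an $n$-cycle, again matching $\det$. Each of (a)--(d) is a one-line computation with the six explicit formulas.

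The main obstacle is to justify that the moves (a)--(d) are exhaustive --- that any change of general-position slicing and any legal isotopy factors through them. For the subclass of diagrams with no $n$-valent vertex this is the coherence theorem for the graphical calculus of rigid (pivotal) monoidal categories, proved by a Morse-theoretic analysis of generic isotopies: such an isotopy is a finite sequence of births and deaths of cup--cap pairs and of swaps of the heights of two turning points or vertices, and each of those elementary events is one of (a)--(c). The genuinely new ingredient is the $n$-valent vertex, where one must show that any two presentations of a neighborhood of a sink vertex as a stack of layers differ only by moves of type (d); since such a neighborhood is a single morphism $\vprod n\to\C$ carrying only the datum of a marked point (the cilium) on its boundary circle, this reduces once more to the same Morse argument, now for the free rigid monoidal category obtained by adjoining that one morphism subject only to its manifest antisymmetry under $\Sigma_n$. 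To write this up I would either invoke this machinery from the spin-network and pivotal-category literature cited in the introduction, or --- closer to the spirit of the paper --- render the Morse argument in pictures and carry out (a)--(d) by hand, treating the vertex case explicitly for $n=2$ and $n=3$. I expect the verifications of the middle step to be entirely routine; the completeness claim in this last step is the only place that calls for either real care or an outside reference.
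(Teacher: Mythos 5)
Your proposal is correct in outline and proves the theorem by the ``generators and relations'' route: slice the diagram into layers, check invariance under a complete list of elementary moves (interchange, the snake identity, label sliding, and the vertex-local moves), and then argue that these moves exhaust all changes of generic position and all admissible isotopies. The paper's own argument is a much shorter normal-form sketch: it uses the single computation $\kinka=\upa$ to show that isotopies changing the number of local maxima and minima of cup/cap type do not change the function, then positions the diagram with the \emph{minimum} possible number of cups and caps and asserts that in that position the decomposition into basic maps is unique. The two strategies buy different things. The paper's version is brief and stays close to the one computation a reader has just seen, but it quietly suppresses exactly the points you make explicit: it never checks the interchange law $(f\tensor\Id)\circ(\Id\tensor g)=(\Id\tensor g)\circ(f\tensor\Id)$, never verifies that a matrix label can be carried around a cup or cap (the content of the relation $(A^T\bfu)\cdot\bfv=\bfu\cdot(A\bfv)$), and the claim that the minimal-cup decomposition is ``the only one'' is not literally true without quotienting by height interchanges. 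Your version fills all of these in and correctly isolates the one genuinely nontrivial step --- exhaustiveness of the move list, especially near an $n$-valent vertex --- which you propose to settle by the Morse-theoretic coherence argument or by citation; this is precisely what the paper's accompanying remark defers to the spin-network literature as well. Two small cautions: the claim that every $n$-vertex can be made a local extremum follows from isotopy (possibly at the cost of extra cups and caps), not from the sink/source condition alone, which concerns orientation rather than geometric position; and the edge-transposition in your move (d) is not an admissible isotopy (it moves a ciliation), so it belongs in your proof only as the consistency check you describe, not as one of the moves generating equivalence.
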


The decomposition assumes the diagram is in `general position' relative to the vertical direction, and involves ``chopping'' the diagram up into these particular pieces. Vertical stacking of diagrams corresponds to the composition of functions. Several examples follow.

\begin{example}
	To compute $\cupar$, switch the order of the outputs:
		$$\cupar=
		\tikz{
			\hpatha(0,0,-.7)(-.7,.5,0)\hpatha(0,0,.7)(.7,.5,0);
			\draw(.7,.5)..controls+(.03,.3)and+(-.03,-.3)..(-.7,1.2);
			\draw(-.7,.5)..controls+(.03,.3)and+(-.03,-.3)..(.7,1.2);
		}:1\overset{\cupa}{\longmapsto}\sum_i\bse^i\tensor\bse_i
			 \overset{\px}{\longmapsto}\sum_i\bse_i\tensor\bse^i.$$
\end{example}

\begin{exercise}
	Show that $\tikz{\hpatha(0,0,-1)(0,1,-1)\hpatha(0,1,1)(0,0,1)}:1\mapsto\dim(V)=n$.
\end{exercise}

Thus, the basic loop is equivalent to the scalar $n$.
Here is the simplest closed diagram with a matrix, and the reason for the terminology `trace' diagram:
\begin{example}
	Show that $\tikz{\hpathmxa(0,0,-1)(0,1,-1)(A)\hpatha(0,1,1)(0,0,1)}=\tr(A)$.
\begin{soln}
	Decompose the diagram
		$\tikz{\hpathmxa(0,0,-1)(0,1,-1)(A)\hpatha(0,1,1)(0,0,1)}=\capa\circ\left(\ima(A)\spc \downa\right)\circ\cupar$.
	Then
		$$
		\tikz{\hpathmxa(0,0,-1)(0,1,-1)(A)\hpatha(0,1,1)(0,0,1)}
			:1\overset{\cupar}{\longmapsto}\sum_i \bse_i\tensor\bse^i
			\overset{\ima(A)\spc \downa}{\longmapsto}\sum_i (A\bse_i)\tensor\bse^i
			\overset{\capa}{\longmapsto}\sum_i (A\bse_i)\cdot\bse_i=\sum_i a_{ii}=\tr(A).
		$$
\end{soln}
\end{example}

\begin{exercise}
	Show that \eqref{capproblem} is no longer a problem, since $\imar(A):\bfv^T \longmapsto(A\bfv)^T.$
\end{exercise}

\begin{example}
	Compute $\kinka$.
\begin{soln}
 	$\kinka=\upa$ since the decomposition $\kinka=\capa\upa\circ\upa\cupa$ gives
  	$$
  		\kinka:\bfv
  		\overset{\upa\cupa}{\longmapsto}
  		\sum_i\bfv\tensor\bse^i\tensor\bse_i
  		\overset{\capa\upa}{\longmapsto}
  		\sum_i(\bfv\cdot\bse_i)\bse_i=\sum_i\bfv_i\bse_i=\bfv.
  	$$
\end{soln}
\end{example}

This last result was expected since both $\kinka$ and $\upa$ are equivalent forms of the same trace diagram. Moreover, this fact essentially proves Theorem \ref{decomptheorem}:

\begin{proof}[Sketch proof of Theorem \ref{decomptheorem} (adapted from \cite{peterson06})]
	The previous example shows that isotopies changing the number of local maxima and minima of the form $\cupa$ and $\capa$ do not
	change the function. Thus, a trace diagram may be assumed to be positioned with the minimum possible number of cups and caps.
	With this restriction, there is only one way to decompose a map into the component maps given in the theorem.
\end{proof}

\begin{remark}
	The correspondence between diagrams and functions established by the Fundamental Theorem of Trace Diagrams demonstrates an 
	\emph{equivalence of categories} and is well-known in literature on spin networks.
	It is usually proven by showing that the algebra of diagrams modulo specific relations is equivalent to the algebra of functions
	modulo their relations \cite{stedman90}.
\end{remark}

\begin{remark}
	In particular cases, the diagrams can be simplified somewhat.
	\begin{itemize}
	\item If $n$ is odd, then the determinant is \emph{cyclically invariant}:
					$$\det[\bfv_1\spc \cdots\spc \bfv_n]=\det[\bfv_2\spc \cdots\spc \bfv_n\spc \bfv_1],$$
				so the ciliation is unnecessary; the cyclic orientation implied by the drawing of a diagram in the plane is sufficient.
	\item	For $2$-trace diagrams, the orientation is unnecessary,
				and frequently the cap $\capp$ is defined to be $\bfv\tensor\bfw \mapsto i\cdot\det[\bfv\spc \bfw]$ rather than the inner product.
				With the factor of $i$, there is no need to keep track of ciliations, and the diagrams are simply collections of arcs and loops
				labelled by matrices \cite{carter95,kauffman91}.
	\end{itemize}
\end{remark}

%%%%%%%%%%%%%%%%%%%%%%%%%%%%%%%%%%%%%%%%%%%%%%%%%%%%%%%%%%%%%%%%%%%%%%%%%%%%%%%%%%%%%%%%%%%%%%%%%%%%%%%%%%%%
\section{Simplifying Diagrammatic Calculations}
%%%%%%%%%%%%%%%%%%%%%%%%%%%%%%%%%%%%%%%%%%%%%%%%%%%%%%%%%%%%%%%%%%%%%%%%%%%%%%%%%%%%%%%%%%%%%%%%%%%%%%%%%%%%

Theorem \ref{decomptheorem} says that a function's diagram may be computed by transforming the diagram into some sort of ``standard form.'' In practice, it is better to have a working knowledge of the functions of the few basic diagrams described next.
This is abundantly clear in the following computation:

\begin{example}
	Show that the diagram
	$\tikz[heighttwo,scale=.8]{
		\hpatha(0,0,0)(0,.7,0)\hpatha(0,1.7,0)(0,2.4,0)\hpatha(0,1.7,-1)(0,.7,-1)\hpatha(0,1.7,1)(0,.7,1)
		\cilur<(0,.7)>(-.1,-.5)\cilur<(0,1.7)>(.1,-.5)
	}$
	is the function $\bfv\mapsto-2\bfv$.
\begin{soln}
	Arrange the diagram so it decomposes into the basic component maps. Then:
	\begin{align*}
		\tikz[heighttwo,yscale=.65]{
			\hpatha(-.5,-.5,0)(-.5,1,0)
			\hpatha(-.5,1,0)(0,2,-.5)\hpatha(0,1,0)(0,2,0)\hpatha(.5,1,0)(0,2,.5)
			\hpatha(1,0,-1)(0,1,0)\hpatha(1,.5,-.5)(.5,1,0)
			\hpatha(2,1,0)(1,0,1)\hpatha(1.5,1,0)(1,.5,.5)
			\hpatha(2,2,-.5)(1.5,1,0)\hpatha(2,2,0)(2,1,0)\hpatha(2,2,.5)(2.5,1,0)
			\hpatha(2.5,1,0)(3,.5,-.5)\hpatha(3,.5,.5)(3.5,1,0)
			\hpatha(3.5,1,0)(3.5,2.5,0)
			\cilr<(0,2)>(-.5)\cilr<(2,2)>(-.5)
		}:v&\mapsto\sum_{i,j,k}\bfv\tensor\bse_i\tensor\bse_j\tensor\bse^j\tensor\bse^i\tensor\bse^k\tensor\bse_k\\
			&=\sum_{i\neq j,j\neq k,i\neq k}\det[\bfv\spc \bse_i\spc \bse_j]\det[\bse^j\spc \bse^i\spc \bse^k]^T \bse_k.
	\end{align*}
	But
		\begin{align*}
			\det[\bfv\spc \bse_i\spc \bse_j]\det[\bse^j\spc \bse^i\spc \bse^k]^T
			%&=\det[(\bfv\spc \bse_i\spc \bse_j)^T(\bse_j\spc \bse_i\spc \bse_k)]
			=\det
			\begin{bmatrix}
				\bfv\cdot\bse_j & \bfv\cdot\bse_i & \bfv\cdot\bse_k\\
				0 & 1 & 0\\
				1 & 0 & 0
			\end{bmatrix}
			&=-\bfv\cdot\bse_k=-v_k.
		\end{align*}
	Therefore,
		$$\tikz[heighttwo,scale=.8]{
				\hpatha(0,0,0)(0,.7,0)\hpatha(0,1.7,0)(0,2.4,0)\hpatha(0,1.7,-1)(0,.7,-1)\hpatha(0,1.7,1)(0,.7,1)
				\cilr<(0,.7)>(-.5)\cilr<(0,1.7)>(-.5)
			}:\bfv\mapsto\sum_{i\neq j,j\neq k,i\neq k}(-v_k)\bse_k = -2\sum_k v_k \bse_k = -2\bfv.$$
\end{soln}
\end{example}

Fortunately, there is an easier approach. The remainder of this section gives several results that make such technical calculations unnecessary. 
Several of the proofs are deferred to the appendix.

\begin{prop}\label{nodeantisymmetry}
	The $n$-vertices of trace diagrams are antisymmetric, meaning a sign is introduced if either (i) two inputs at a node are switched or (ii)
	a ciliation is moved across an edge. Alternately, if any collection of inputs at an $n$-vertex are linearly dependent,
	the diagram evaluates to zero.
\begin{proof}
	These facts are restatements of standard facts regarding the determinant: switching two columns introduces a sign, while the determinant
	of a square matrix with linearly dependent rows or columns is zero.
\end{proof}
\end{prop}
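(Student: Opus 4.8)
The plan is to route everything through the functional dictionary of Theorem~\ref{decomptheorem}. A sink-oriented $n$-vertex is precisely the map $\bfv_1\tensor\cdots\tensor\bfv_n\mapsto\det[\bfv_1\spc \cdots\spc \bfv_n]$, where the argument order $1,\ldots,n$ is exactly the order read off from the ciliation. So each clause of the proposition translates into a statement about how $\det[\bfv_1\spc \cdots\spc \bfv_n]$ behaves under a permutation of its column vectors, and I would prove those statements directly from the Leibniz expansion~\eqref{detformula}. The point to keep in mind throughout is that Theorem~\ref{decomptheorem} already guarantees the diagram's value is well-defined, so it is legitimate to argue entirely at the level of this function rather than fussing with the picture.

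First I would treat (i). Switching two inputs $\bfv_i$ and $\bfv_j$ at the vertex replaces $[\bfv_1\spc \cdots\spc \bfv_n]$ by the matrix with columns $i$ and $j$ interchanged. In~\eqref{detformula}, the reindexing $\sigma\mapsto\sigma\circ(i\,j)$ is a bijection of $\Sigma_n$ that negates every term, since $\sgn(\sigma\circ(i\,j))=-\sgn(\sigma)$; hence the determinant, and so the associated function and the diagram, changes sign. Applying the same observation to $\det[\bfv_1\spc \cdots\spc \bfv_n]=\det[\bfv_1\spc \cdots\spc \bfv_n]^T$ gives the analogous statement for the oppositely oriented $n$-vertex, which by Theorem~\ref{decomptheorem} computes the determinant of row vectors.

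Next, (ii). Pushing the ciliation past a single incident edge rotates the edge order by one step; under the dictionary this precomposes the vertex map with the $n$-cycle $\rho=(1\,2\,\cdots\,n)$ acting on the tensor factors, equivalently it cyclically permutes the columns of $[\bfv_1\spc \cdots\spc \bfv_n]$. Reindexing $\sigma\mapsto\sigma\circ\rho$ in~\eqref{detformula} multiplies the determinant by $\sgn(\rho)=(-1)^{n-1}$. This is the step I would be most careful about: the factor $(-1)^{n-1}$ equals $-1$ exactly when $n$ is even, so ``a sign is introduced'' is really the statement ``a factor of $(-1)^{n-1}$ appears''; for odd $n$ it is $+1$, which is precisely why the ciliation is dispensable in that case. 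I would phrase the conclusion in this sharper form.

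Finally, the linear-dependence clause. By multilinearity of the $n$-vertex (it is literally $\det[\,\cdot\,]$) together with part (i), an $n$-vertex with two equal inputs equals its own negative and so vanishes; expressing one input as a linear combination of the others and expanding then forces the value to zero whenever the inputs are linearly dependent — once more a restatement of the fact that a square matrix with linearly dependent columns has determinant $0$. I do not anticipate any genuine obstacle here: the only real care needed is the sign bookkeeping in (ii), and keeping every manipulation at the level of the well-defined function promised by Theorem~\ref{decomptheorem}.
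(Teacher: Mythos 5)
Your proof is correct and takes essentially the same route as the paper's, which simply observes that both clauses are restatements of standard determinant facts (a column swap negates the determinant; linearly dependent columns force it to vanish) — exactly your argument via the Leibniz expansion and multilinearity, just written out. The one point where you genuinely sharpen the paper is clause (ii): moving the ciliation past one edge is a cyclic shift of the columns and hence multiplies the value by $\sgn(1\,2\,\cdots\,n)=(-1)^{n-1}$, which is $+1$ for odd $n$, so the blanket phrase ``a sign is introduced'' is imprecise as stated; your formulation is the correct one and is consistent with the paper's own later remark that for odd $n$ the ciliation is unnecessary because the determinant is cyclically invariant.
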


\begin{prop}[Matrix Invariance]\label{mxinvthm}
	Matrices may be `cancelled' at any node with the addition of a determinant factor. In particular,
		$$\mxnodea(,,A)=\det(A)\asymnk(,)
		\quad\text{and}\quad
		\asymnkmx(,,A)=\det(A)\asymnkmxB(,,\bar A),$$
	where $\bar A=A^{-1}$.
\begin{proof}
	Topological invariance implies that only one case need be checked:
	$$
		\nodendownmx(n,A): \bfv_1\dtensor\bfv_n \mapsto \det[A\bfv_1\spc \cdots\spc A\bfv_n]=\det(A)\det[\bfv_1\spc \cdots\spc \bfv_n].
	$$
	The second relation follows from the first.
\end{proof}
\end{prop}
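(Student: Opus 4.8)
The plan is to read each $n$-vertex, via Theorem \ref{decomptheorem}, as a copy of the determinant map, and then to let topological invariance collapse both relations to a single multiplicativity statement about determinants — filling in the two gaps in the sketch, namely why only one configuration need be checked and how the inverse $\bar A$ appears.

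First I would fix a model. By Theorem \ref{decomptheorem} the function of a trace diagram is unchanged by isotopy and by the cup/cap moves that alter the number of local maxima and minima, and a matrix sitting on an edge is carried along faithfully by these moves — this is exactly the content of the exercise $\imar(A):\bfv^T\longmapsto(A\bfv)^T$. So an $n$-vertex carrying matrices on some of its incident edges is equivalent, after such moves, to a sink all of whose $n$ edges are inputs, the $i$th one bearing the matrix that originally sat on the $i$th edge; a source vertex, or any mixed routing, is settled by the same computation performed on rows rather than columns. It therefore suffices to evaluate these models.

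For the first relation the model of $\mxnodea(,,A)$ is
$$\bfv_1\dtensor\bfv_n\longmapsto\det[A\bfv_1\spc\cdots\spc A\bfv_n],$$
and since $[A\bfv_1\;\cdots\;A\bfv_n]=A\,[\bfv_1\;\cdots\;\bfv_n]$, multiplicativity of $\det$ rewrites this as $\det(A)\det[\bfv_1\spc\cdots\spc\bfv_n]$, i.e.\ $\det(A)$ times the model of the bare vertex $\asymnk(,)$. For the second relation (where $A$ is assumed invertible, so $\bar A=A^{-1}$), the model of $\asymnkmx(,,A)$ is $\det$ of the matrix whose columns are $A\bfv_i$ on the edges originally labelled $A$ and $\bfv_i$ on the rest. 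Writing each unlabelled column as $\bfv_i=A(\bar A\bfv_i)$ and pulling one factor of $A$ out of every column extracts $\det(A)$, by the computation just made, and leaves $\det$ of the matrix with columns $\bfv_i$ on the $A$-edges and $\bar A\bfv_i$ on the others; bending the latter edges back to their original position, this is precisely $\det(A)$ times $\asymnkmxB(,,\bar A)$.

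The determinant algebra is routine. The step that needs genuine care — and the likely main obstacle — is the reduction to a single model: one must be sure that every routing of the edges (any number up, any number down, the vertex a source or a sink) is connected to the chosen model by moves already validated in Theorem \ref{decomptheorem}, and that each matrix label is transported intact. It is also worth flagging that invertibility of $A$ is genuinely needed for the second relation, and that iterating this proposition is exactly what drives the collapse of the diagram in Figure \ref{coolpic}.
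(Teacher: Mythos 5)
Your proposal is correct and follows the paper's own route: reduce every configuration of the $n$-vertex to the all-inputs model by topological invariance, evaluate it as $\det[A\bfv_1\spc\cdots\spc A\bfv_n]=\det(A)\det[\bfv_1\spc\cdots\spc\bfv_n]$, and obtain the second relation by inserting $A\bar A=I$ on the remaining edges. You have merely filled in the two steps the paper leaves implicit (the reduction to one case and the derivation of the $\bar A$ identity), and your observations about transporting matrix labels under isotopy and about the necessity of invertibility are consistent with what the paper intends.
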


The next two propositions, whose proofs are found in the appendix, give the functions at a diagram's nodes:
\begin{prop}\label{standardnode}
	The following diagram is a ``complemental antisymmetrizer'':
		\begin{equation}\label{cantisym}
			\asymnk(n-k,k):\bse_{\alpha_1}\tensor\cdots\tensor\bse_{\alpha_k} \longmapsto
			\sum_{\sigma\in\Sigma_{n-k}}\sgn(\alpha|\overset\leftarrow\sigma)\bse^{\sigma(k+1)}\tensor\cdots\tensor\bse^{\sigma(n)},
		\end{equation}
	where the sum is over permutations on the complement of $\{\alpha_1,\ldots,\alpha_k\}$ and
		$$\sgn(\alpha|\overset\leftarrow\sigma)
				=\sgn\left(\alpha_1\spc \cdots\spc \alpha_k\sigma(n)\spc \cdots\spc \sigma(k+1)\right).$$
	One important special case is the \emph{codeterminant} map
		\begin{equation}
			\nodenup(n):1\mapsto\sum_{\sigma\in\Sigma_n}\sgn(\overset\leftarrow\sigma)\bse_{\sigma(1)}\tensor\cdots\tensor\bse_{\sigma(n)}.
		\end{equation}
\end{prop}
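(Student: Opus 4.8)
The plan for the appendix proof is as follows. By the topological invariance furnished by Theorem~\ref{decomptheorem}, it suffices to evaluate the function attached to one convenient planar presentation of $\asymnk(n-k,k)$. I would realize this node as the basic determinant $n$-vertex $\nodendown(n)$ with $n-k$ of its legs bent from the bottom to the top by cups; up to a planar rearrangement of the strands,
$$\asymnk(n-k,k)\;=\;\bigl(\nodendown(n)\tensor\Id_{V^{\tensor(n-k)}}\bigr)\circ\bigl(\Id_{V^{\tensor k}}\tensor(\cupa)^{\tensor(n-k)}\bigr),$$
where each cup is oriented so that its column-vector leg enters the node and its row-vector leg leaves as an output. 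Feeding in $\bse_{\alpha_1}\dtensor\bse_{\alpha_k}$, each of the $n-k$ cups contributes $\sum_i\bse^i\tensor\bse_i$, so the node receives $\bse_{\alpha_1}\dtensor\bse_{\alpha_k}\tensor\bse_{i_{k+1}}\dtensor\bse_{i_n}$ and returns $\det[\bse_{\alpha_1}\spc\cdots\spc\bse_{\alpha_k}\spc\bse_{i_{k+1}}\spc\cdots\spc\bse_{i_n}]$, while the matching row vectors $\bse^{i_{k+1}},\ldots,\bse^{i_n}$ are carried along as the $n-k$ outputs.

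Next I would read off the surviving terms. By Proposition~\ref{nodeantisymmetry} the determinant above vanishes unless $\{\alpha_1,\ldots,\alpha_k,i_{k+1},\ldots,i_n\}=\{1,\ldots,n\}$ with all entries distinct, which forces $\{i_{k+1},\ldots,i_n\}$ to be exactly the complement of $\{\alpha_1,\ldots,\alpha_k\}$. Thus the nonzero terms are indexed by the bijections $\sigma$ carrying $\{k+1,\ldots,n\}$ onto that complement, and for such a $\sigma$ the determinant equals the sign of the permutation sending $1,\ldots,n$ to $\alpha_1,\ldots,\alpha_k,\sigma(k+1),\ldots,\sigma(n)$. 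Summing, the node outputs $\sum_{\sigma}\pm\,\bse^{\sigma(k+1)}\dtensor\bse^{\sigma(n)}$, so everything reduces to identifying the sign $\pm$ with the declared $\sgn(\alpha|\overset\leftarrow\sigma)$.

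The sign match is the delicate point and the step I expect to be the main obstacle. The reversal appearing in $\overset\leftarrow\sigma$ is an artifact of the geometry: the $n-k$ bent legs form nested arcs, so the strands emerge along the top of the node in the reverse of the order in which they enter it, and one still must check that, with the ciliation restored to the position drawn in $\asymnk(n-k,k)$, no further overall sign creeps in. I would pin down the conventions by verifying the extreme cases $k=n-1$ and $k=0$ by hand. The case $k=0$ gives $1\mapsto\sum_{\sigma\in\Sigma_n}\sgn(\overset\leftarrow\sigma)\,\bse^{\sigma(1)}\dtensor\bse^{\sigma(n)}$; running the same construction on the oppositely oriented $n$-vertex---which by Theorem~\ref{decomptheorem} computes the determinant of row vectors---turns each $\bse^{\sigma(j)}$ into $\bse_{\sigma(j)}$ and yields the codeterminant map $\nodenup(n)$. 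Finally, once the base sign is settled, the general formula \eqref{cantisym} follows most economically by evaluating on $\bse_1\dtensor\bse_k$ and propagating to arbitrary $\alpha$ using the antisymmetry of the $n$-vertex (Proposition~\ref{nodeantisymmetry}), since permuting the inputs multiplies both sides by the same sign.
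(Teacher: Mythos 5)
Your proposal is correct and follows essentially the same route as the paper: the appendix proof likewise redraws $\asymnk(n-k,k)$ as the determinant $n$-vertex with $n-k$ legs bent up by nested cups, observes that the cups feed $\bse_{\sigma(n)}\tensor\cdots\tensor\bse_{\sigma(k+1)}$ (in reversed order, exactly as your geometric remark predicts) into the node while carrying $\bse^{\sigma(k+1)}\tensor\cdots\tensor\bse^{\sigma(n)}$ to the outputs, restricts the sum to the complement of $\{\alpha_i\}$ because repeated columns kill the determinant, and reads off $\sgn(\alpha|\overset\leftarrow\sigma)$ as the value of $\det[\bse_{\alpha_1}\spc\cdots\spc\bse_{\alpha_k}\spc\bse_{\sigma(n)}\spc\cdots\spc\bse_{\sigma(k+1)}]$. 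The sign issue you flag as delicate is resolved in the paper exactly by this reversed ordering of the cup outputs, so no further verification by extreme cases is needed.
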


Note that $\sgn(\overset\leftarrow\sigma)=(-1)^{\lfloor\frac{n}{2}\rfloor}\sgn(\sigma)$ since $\lfloor\frac{n}{2}\rfloor$ swaps are required to reverse the order of a permutation on $n$ elements.

\begin{remark}
	Proposition \ref{standardnode} shows that there is a way to define a ``cross product'' in \emph{any} dimension, although $n-1$ inputs
	are required. In particular, $\crossprod(\bfu,\bfv)$ extends to the family of diagrams
	$$\detjlab(\bfa_1,\bfa_i,,\bfa_{n-1}).$$
\end{remark}

\begin{prop}\label{asymcompare}
	Let $\asuppn(k)$ denote the antisymmetrizer on $k$ vertices defined by
		$$\asuppn(k):\bfa_1\dtensor\bfa_k\longmapsto\sum_{\sigma\in\Sigma_k}\sgn(\sigma)a_{\sigma(1)}\dtensor a_{\sigma(k)}.$$
	If $k>n$, then $\asuppn(k)=0$. Otherwise, for $0\leq k\leq n$,
		$$\asuppn(k)=\frac{(-1)^{\lfloor\frac{n}{2}\rfloor}}{(n-k)!}\asymnkk(k,n-k,k).$$
	In particular cases:
		\begin{equation}\label{asymcapcup}
			\asymcapcup(n)=(-1)^{\lfloor\frac{n}{2}\rfloor}\asuppn(n)
			%\qquad
			%\asymnktwo(n-2)=-(-1)^{\lfloor\frac{n}{2}\rfloor}(n-2)!\left(\upa\spc \upa-\px\right)
			\qquad
			\acircnn(n)=(-1)^{\lfloor\frac{n}{2}\rfloor}n!
			\qquad
			\detnn(A)=(-1)^{\lfloor\frac{n}{2}\rfloor} n! \det(A).
		\end{equation}
\end{prop}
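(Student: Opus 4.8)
The plan is to reduce everything to Proposition \ref{standardnode} (the explicit formula for the complemental antisymmetrizer at an $n$-vertex) and the topological invariance of Theorem \ref{decomptheorem}, and then carry out the combinatorics directly. The case $k>n$ is immediate: $\asuppn(k)$ is multilinear, hence determined by its values on basis tensors $\bse_{i_1}\otimes\cdots\otimes\bse_{i_k}$, and since $k>n=\dim V$ two of the indices coincide, say $i_a=i_b$; pairing each $\sigma\in\Sigma_k$ with $\sigma\circ(a\,b)$ then cancels the defining sum termwise, so $\asuppn(k)=0$ (equivalently, it factors through $\Lambda^kV=0$).

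For $0\le k\le n$, by Theorem \ref{decomptheorem} and multilinearity it suffices to verify the identity on a single basis tensor $\bse_{\alpha_1}\otimes\cdots\otimes\bse_{\alpha_k}$. If the $\alpha_i$ are not distinct both sides vanish --- the left by the remark above, the right because the lower $n$-vertex is antisymmetric in its inputs (Proposition \ref{nodeantisymmetry}) --- so assume they are distinct and write $\bar\alpha$ for the complementary $(n-k)$-element set. I would then evaluate the two nodes of $\asymnkk(k,n-k,k)$ in succession. By Proposition \ref{standardnode} the lower node $\asymnk(n-k,k)$ produces a sum over the $(n-k)!$ orderings $\sigma$ of $\bar\alpha$ carried by the internal edges, with coefficients $\sgn(\alpha|\overset\leftarrow\sigma)$; feeding each such term into the upper node --- which is the same complemental antisymmetrizer with the roles of $k$ and $n-k$ interchanged, and whose outputs range over orderings of the complement of $\{\sigma(k+1),\dots,\sigma(n)\}=\bar\alpha$, namely $\{\alpha_1,\dots,\alpha_k\}$ --- shows that the composite returns a combination of the tensors $\bse_{\alpha_{\rho(1)}}\otimes\cdots\otimes\bse_{\alpha_{\rho(k)}}$, $\rho\in\Sigma_k$.

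The crux, and the step I expect to be the real work, is the sign bookkeeping: for each fixed $\rho$ the product of the two ``$\sgn(\,\cdot|\overset\leftarrow\cdot\,)$'' coefficients must collapse to $(-1)^{\lfloor n/2\rfloor}\sgn(\rho)$, independently of the internal ordering $\sigma$. The tools are the block form of the identity $\sgn(\overset\leftarrow\sigma)=(-1)^{\lfloor n/2\rfloor}\sgn(\sigma)$ noted after Proposition \ref{standardnode} --- reversing a contiguous run of $m$ letters in a one-line word multiplies the sign by $(-1)^{\lfloor m/2\rfloor}$ --- together with the observation that concatenating the two complementary words and cancelling the shared block of $n-k$ internal letters leaves exactly the word that carries $(\alpha_1,\dots,\alpha_k)$ to $(\alpha_{\rho(1)},\dots,\alpha_{\rho(k)})$. (The two nodes are not symmetric, one being a sink and the other a source, so the reversals combine into a single factor $(-1)^{\lfloor n/2\rfloor}$ rather than cancelling; the cases $k=0$ and $k=n$ below are good consistency checks.) Granting this, the inner sum over the $(n-k)!$ internal orderings contributes the factor $(n-k)!$, the remaining sum $\sum_{\rho}\sgn(\rho)\,\bse_{\alpha_{\rho(1)}}\otimes\cdots\otimes\bse_{\alpha_{\rho(k)}}$ equals $\asuppn(k)$ evaluated on $\bse_{\alpha_1}\otimes\cdots\otimes\bse_{\alpha_k}$, and hence $\asymnkk(k,n-k,k)=(-1)^{\lfloor n/2\rfloor}(n-k)!\,\asuppn(k)$; dividing, and using that $(-1)^{\lfloor n/2\rfloor}$ is its own inverse, gives the stated formula.

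Finally, the three displays in \eqref{asymcapcup} are special cases. Taking $k=n$ gives $\asymcapcup(n)=(-1)^{\lfloor n/2\rfloor}\asuppn(n)$, since $(n-k)!=1$ and $\asuppn(n)$ is the full antisymmetrizer. Taking $k=0$ gives $\acircnn(n)=(-1)^{\lfloor n/2\rfloor}n!$, since $\asuppn(0)$ is the identity on $V^{\otimes0}=\C$, i.e.\ the scalar $1$, so the closed loop evaluates to $(-1)^{\lfloor n/2\rfloor}n!$. And $\detnn(A)=(-1)^{\lfloor n/2\rfloor}n!\det(A)$ follows from $\acircnn(n)$ by placing $A$ on each of the $n$ edges and applying Matrix Invariance (Proposition \ref{mxinvthm}) at one of the nodes to extract the factor $\det(A)$.
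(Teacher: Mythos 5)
Your proposal is correct and follows essentially the same route as the paper's appendix proof: apply Proposition \ref{standardnode} at each of the two nodes, observe that the product of the two sign factors collapses to $(-1)^{\lfloor\frac{n}{2}\rfloor}\sgn(\rho)$ independently of the internal ordering, and let the $(n-k)!$ internal orderings supply the combinatorial factor. The sign identity you flag as the crux and "grant" is precisely the short computation the paper carries out, namely $\sgn(\Id|\overset\leftarrow\sigma)\sgn(\sigma|\overset\leftarrow\tau)=(-1)^{\lfloor\frac{n}{2}\rfloor}\sgn(\overset\leftarrow\sigma)^2\sgn(\beta)^2\sgn(\tau)=(-1)^{\lfloor\frac{n}{2}\rfloor}\sgn(\tau)$, so your plan is complete once that line is written out.
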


%%%%%%%%%%%%%%%%%%%%%%%%%%%%%%%%%%%%%%%%%%%%%%%%%%%%%%%%%%%%%%%%%%%%%%%%%%%%%%%%%%%%%%%%%%%%%%%%%%%%%%%%%%%%
\section{The Elegant Adjugate and Clandestine Cramer}
%%%%%%%%%%%%%%%%%%%%%%%%%%%%%%%%%%%%%%%%%%%%%%%%%%%%%%%%%%%%%%%%%%%%%%%%%%%%%%%%%%%%%%%%%%%%%%%%%%%%%%%%%%%%

We now turn to the main results of this paper, the diagrammatic proofs of the adjugate formula,
Cramer's Rule, and the Cayley-Hamilton Theorem. For each, we will first establish the diagrammatic result,
and later show that the diagrammatic relation is equivalent to its standard expression via linear algebra.

The diagrammatic version of the adjugate formula $\adj(A)\cdot A=\det(A) I$ is
\begin{prop}[Diagrammatic Adjugate Formula]\label{adjprop}
	\begin{equation}\label{adjdpf}
		\detnnb(A,A)=(-1)^{\lfloor\frac{n}{2}\rfloor}(n-1)!\det(A)\upa.
	\end{equation}
\begin{proof}
	Use Propositions \ref{mxinvthm} and \ref{asymcompare}.
\end{proof}
\end{prop}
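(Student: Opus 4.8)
The plan is a two-step reduction. First, observe that in the diagram $\detnnb(A,A)$ \emph{every} edge incident to the bottom $n$-vertex carries the matrix $A$: there are $n-1$ internal edges running up to the top vertex, each labeled $A$, together with the single input edge, also labeled $A$, for a total of $n$ edges all bearing $A$. (The top vertex, by contrast, meets only the $n-1$ internal edges labeled $A$ together with the bare output edge, which is exactly why the matrices in $\detnnb(A,A)$ are placed as they are.) Applying the Matrix Invariance Proposition~\ref{mxinvthm} at the bottom vertex therefore strips all $n$ copies of $A$ from that node at the cost of a single scalar factor $\det(A)$, giving
\[
	\detnnb(A,A)=\det(A)\,\asymnkk(1,n-1,1),
\]
where $\asymnkk(1,n-1,1)$ is the matrix-free diagram of two $n$-vertices joined by $n-1$ strands with one free input and one free output.

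The second step evaluates $\asymnkk(1,n-1,1)$ by Proposition~\ref{asymcompare} with $k=1$. There, $\asuppn(1)$ is the antisymmetrizer on a single strand, which is simply the identity map $\upa$; hence the general identity $\asuppn(k)=\tfrac{(-1)^{\lfloor n/2\rfloor}}{(n-k)!}\asymnkk(k,n-k,k)$ specializes to $\upa=\tfrac{(-1)^{\lfloor n/2\rfloor}}{(n-1)!}\asymnkk(1,n-1,1)$, that is,
\[
	\asymnkk(1,n-1,1)=(-1)^{\lfloor n/2\rfloor}(n-1)!\,\upa,
\]
using $\bigl((-1)^{\lfloor n/2\rfloor}\bigr)^{-1}=(-1)^{\lfloor n/2\rfloor}$. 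Substituting this into the previous display yields precisely \eqref{adjdpf}.

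The only point that needs care is the bookkeeping in the first step: one must check that the source/sink orientation of the bottom vertex and the directions of the arrows on its $n$ edges are exactly those under which Proposition~\ref{mxinvthm} produces the clean factor $\det(A)$ (rather than a $\det(A^{T})$, which is harmless, or a $\det(A^{-1})$, which would not be), and that deleting those matrices leaves a diagram isotopic --- with ciliations in matching positions --- to the $\asymnkk(1,n-1,1)$ appearing in Proposition~\ref{asymcompare}. Both are routine once the diagram is placed in general position, so the real content of the argument is just the one-line composition of Propositions~\ref{mxinvthm} and~\ref{asymcompare}.
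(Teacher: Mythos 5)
Your proof is correct and follows exactly the route the paper intends: its entire proof is the instruction ``Use Propositions \ref{mxinvthm} and \ref{asymcompare},'' and you have filled in precisely those two steps --- stripping all $n$ copies of $A$ at the bottom vertex for a factor of $\det(A)$, then evaluating the matrix-free double-vertex diagram via the $k=1$ case of Proposition \ref{asymcompare}. The bookkeeping caveats you flag are handled correctly, so nothing further is needed.
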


Cramer's Rule is actually `hiding' in this diagram.
Recall that if $A=[\bfa_1\spc \bfa_2\spc \cdots\spc \bfa_n]$, then \emph{Cramer's Rule} states that the elements of the solution $\bfx$ are given by
	$$x_j=\frac{\det(A_j)}{\det(A)}=\frac{\det[\bfa_1\spc \cdots\spc \bfa_{j-1}\spc \bfb\spc \bfa_{j+1}\spc \cdots\spc \bfa_n]}{\det(A)},$$
where $A_j$ is, as shown, the matrix obtained from $A$ by replacing the $j$th column with the vector $\bfb$.

\begin{prop}[Diagrammatic Cramer's Rule]\label{cramerdthm}
	Suppose that the columns of $A_j$ are identical to the columns of $A$, except that the $j$th column of $A_j$ is $\bfx$. Then
	\begin{equation}\label{cramerdpf}
		\vdetbv(A_j,A_j,j,j)=(-1)^{\lfloor\frac{n}{2}\rfloor}(n-1)!\det(A)\ivv(j,\bfx).
	\end{equation}
\begin{proof}
	By Propositions \ref{mxinvthm} and \ref{asymcompare},
	\begin{equation}\label{cramerproof}
		\det(A_j)=\frac{(-1)^{\lfloor\frac{n}{2}\rfloor}}{(n-1)!}\vdetbv(A_j,A_j,j,j)
		=\frac{(-1)^{\lfloor\frac{n}{2}\rfloor}}{(n-1)!}\vdetbv(A,A,j,\bfx)
		=\det(A)\ivv(j,\bfx)=\det(A)x_j.
	\end{equation}
	The second step follows from the fact that $A_j \bse_j=\bfb=A\bfx$ and the following lemma:
	\begin{lemma}\label{crossoutlemma}
		Let $\upon(j)$ represent the linear function mapping $\bse_j\mapsto 0$ and $\bse_i\mapsto 1$ for $i\neq j$.
		If the matrix $A_j$ is any matrix obtained by replacing the $j$th column of $A$ by an arbitrary vector, then
		  $$\uponm(A,j)=\uponm(A_j,j)\quad\text{and}\quad\detjaj(A,j)=\detjaj(A_j,j).$$
	\begin{proof}
		The first statement holds because $A\bse_i=A_j\bse_i$ unless $i=j$, in which case both diagams evaluate to zero.
		For the second statement, note that if two strands adjacent to a single node are labeled with the same vector, then the
		diagram evaluates to zero. Consequently,
			$$\detj(j)=\detjj(j).$$
		Combined with the first statement, this proves the second.
	\end{proof}
	\end{lemma}
\end{proof}	
\end{prop}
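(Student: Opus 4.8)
The plan is to establish the identity through the two-step chain recorded in \eqref{cramerproof}: I would first \emph{transform} the diagram $\vdetbv(A_j,A_j,j,j)$ into $\vdetbv(A,A,j,\bfx)$, and then \emph{evaluate} the latter by the Diagrammatic Adjugate Formula (Proposition \ref{adjprop}). Splitting the argument this way isolates the genuine content of Cramer's Rule in the transformation, after which the evaluation is purely mechanical.

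First I would carry out the transformation one strand at a time. The central strand of $\vdetbv(A_j,A_j,j,j)$ delivers the vector $A_j\bse_j$ to the bottom node; because the $j$th column of $A_j$ is $\bfb$ while $\bfx$ is chosen so that $A\bfx=\bfb$, this is the very same vector $A_j\bse_j=\bfb=A\bfx$ supplied by the central strand of $\vdetbv(A,A,j,\bfx)$, so the two central strands may be identified outright. It then remains to relabel the $n-1$ connecting strands from $A_j$ to $A$. Since $A_j$ and $A$ differ only in their $j$th column, this is exactly the replacement sanctioned by the cross-out lemma (Lemma \ref{crossoutlemma}), whose second relation $\detjaj(A,j)=\detjaj(A_j,j)$ states that, once the output is held in the $\bse_j$ direction, the $j$th column of a matrix on an incident strand is immaterial; the proof of that relation rests on the vanishing of any diagram in which two strands at a single node carry the same vector (Proposition \ref{nodeantisymmetry}). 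Applying the lemma at the top node to each connecting strand turns every $A_j$ into $A$ and produces $\vdetbv(A,A,j,\bfx)$.

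The evaluation is then immediate. The diagram $\vdetbv(A,A,j,\bfx)$ is nothing but the adjugate diagram $\detnnb(A,A)$ with its input specialized to $\bfx$ and its output read in the $\bse_j$ direction. By Proposition \ref{adjprop} (in turn a consequence of Propositions \ref{mxinvthm} and \ref{asymcompare}) one has $\detnnb(A,A)=(-1)^{\lfloor\frac{n}{2}\rfloor}(n-1)!\det(A)\,\upa$, so feeding $\bfx$ and pairing the output with $\bse^j$ gives $(-1)^{\lfloor\frac{n}{2}\rfloor}(n-1)!\det(A)(\bse^j\cdot\bfx)=(-1)^{\lfloor\frac{n}{2}\rfloor}(n-1)!\det(A)\,\ivv(j,\bfx)$, since $\ivv(j,\bfx)=x_j$. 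Chaining the two steps yields the asserted identity.

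I expect the transformation to be the main obstacle, and in particular the \emph{simultaneous} conversion of $A_j$ to $A$ on all $n-1$ connecting strands alongside the change of input vector. It is essential that this be done diagrammatically rather than by evaluating the two sides: were one instead to apply Proposition \ref{adjprop} to each side separately, the equality $\vdetbv(A_j,A_j,j,j)=\vdetbv(A,A,j,\bfx)$ would collapse to $\det(A_j)=\det(A)\,x_j$ --- that is, to Cramer's Rule itself --- and nothing would have been gained. The cross-out lemma is precisely what breaks this circularity, and the delicate point to confirm is that the $\bse_j$-pinning makes the $j$th column irrelevant on each connecting strand independently of what the remaining strands carry, so that all the relabelings may be performed at once.
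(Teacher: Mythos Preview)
Your proposal is correct and follows essentially the same route as the paper's own proof: both arguments split into (i) the transformation $\vdetbv(A_j,A_j,j,j)\to\vdetbv(A,A,j,\bfx)$ via the central-strand identity $A_j\bse_j=\bfb=A\bfx$ together with Lemma~\ref{crossoutlemma} on the remaining $n-1$ strands, and (ii) the evaluation via Proposition~\ref{adjprop} (equivalently Propositions~\ref{mxinvthm} and~\ref{asymcompare}). Your remark that the cross-out lemma is what prevents the argument from collapsing into the circular statement $\det(A_j)=\det(A)x_j$ is exactly the point, and your observation that the lemma may be applied independently to each connecting strand (since the $\bse_j$-pinning at the top node forces vanishing whenever any single incident strand carries $\bse_j$, regardless of the others) is the right justification for doing all $n-1$ relabelings at once.
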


It should be clear from \ref{cramerproof} that Proposition \eqref{cramerdpf} implies establish Cramer's Rule, but it may not be clear where the adjugate matrix shows up in Proposition \ref{adjprop}. In traditional texts, the adjugate matrix is constructed by (i) building a matrix of cofactors, (ii) applying sign changes along a checkerboard pattern, and (iii) transposing the result.

In contrast, the diagram $\adj(A)$ is quite simple:
\begin{prop}
	The matrix elements of $\adj(A)$ may be expressed as
	$$(\adj(A))_{ji}=\frac{(-1)^{\lfloor\frac{n}{2}\rfloor}}{(n-1)!}\vadjv(A,j,i),$$
\begin{proof}
The matrix element of the diagram must somehow encode all the traditional steps required for computing the adjugate. But how?
First, `crossing out' occurs when the basis elements $\bse_i$ and $\bse_j$ are placed adjacent to the nodes, sign changes are encoded in the orientation of the node, and the transpose comes into play because the matrices in the diagram are along downward-oriented strands.

Formally, note that the signed $(i,j)$-cofactor may be expressed as the determinant of the matrix
		$$A_j=[\bfa_1\spc \cdots\spc \bfa_{j-1}\spc \bse_i\spc \bfa_{j+1}\spc \cdots\spc \bfa_n]
			=\begin{bmatrix}
					a_{11} & \cdots & 0 		 & \cdots & a_{1n}\\
					\vdots & \ddots & \vdots & \ddots & \vdots\\
					a_{i1} & \cdots & 1		   & \cdots & a_{in}\\
					\vdots & \ddots & \vdots & \ddots & \vdots\\
					a_{n1} & \cdots & 0 		 & \cdots & a_{nn}
			\end{bmatrix}
		,$$
in which the $j$th column of $A$ is replaced by $\bse_i$.
By definition, $(\adj(A))_{ji}=\det(A_j)$.

The following equation, which is remarkably similar to \eqref{cramerproof}, shows how to find the adjugate matrix:
		\begin{equation}\label{adjproof}
	  	(\adj(A))_{ji}=\det(A_j)=\frac{(-1)^{\lfloor\frac{n}{2}\rfloor}}{(n-1)!}\vdetbv(A_j,A_j,j,j)
	  	=\frac{(-1)^{\lfloor\frac{n}{2}\rfloor}}{(n-1)!}\vadjv(A,j,i).
	  \end{equation}
This establishes the result.
\end{proof}
\end{prop}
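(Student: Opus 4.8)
The plan is to peel the computation back to the normalization behind Cramer's Rule together with the cross-out Lemma \ref{crossoutlemma}, so that essentially no new work is needed. First I would record the standard reformulation of the cofactor: $(\adj(A))_{ji}$ is the signed $(i,j)$-cofactor of $A$, and this cofactor equals $\det(A_j)$, where $A_j$ is obtained from $A$ by replacing its $j$th column with the standard column vector $\bse_i$. Expanding that determinant along its $j$th column returns $(-1)^{i+j}$ times the $(i,j)$-minor, which is exactly the cofactor, so the usual checkerboard bookkeeping is hidden inside the substitution and nothing has to be tracked by hand.

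Next I would convert the scalar $\det(A_j)$ into a closed trace diagram. By Propositions \ref{mxinvthm} and \ref{asymcompare} — the same normalization used for Cramer's Rule in Proposition \ref{cramerdthm} — one has $\det(M)=\frac{(-1)^{\lfloor n/2\rfloor}}{(n-1)!}\,\vdetbv(M,M,j,j)$ for every $n\times n$ matrix $M$; taking $M=A_j$ gives $(\adj(A))_{ji}=\frac{(-1)^{\lfloor n/2\rfloor}}{(n-1)!}\,\vdetbv(A_j,A_j,j,j)$. The last step is to massage this diagram into $\vadjv(A,j,i)$: replace each of the $n-1$ side strands labeled $A_j$ by $A$ using the second identity of Lemma \ref{crossoutlemma}, and then erase the remaining matrix $A_j$ sitting on the bottom strand with input $\bse_j$, since by construction $A_j\bse_j$ is the $j$th column of $A_j$, namely $\bse_i$. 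What remains is precisely the diagram $\vadjv(A,j,i)$, and chaining the three equalities yields the proposition.

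The one point needing care — the main obstacle — is matching the wiring of $\vadjv(A,j,i)$ to the hypotheses of Lemma \ref{crossoutlemma}: one must check that feeding $\bse_i$ into the bottom strand and reading off the $\bse_j$-coefficient at the top really does implement ``replace the $j$th column of $A$ by $\bse_i$,'' so that the cross-out lemma may be applied to all $n-1$ copies of $A_j$ at once. Once that identification is in hand, the sign $(-1)^{\lfloor n/2\rfloor}$ and the factor $1/(n-1)!$ are inherited verbatim from Proposition \ref{asymcompare}, and nothing further is required.
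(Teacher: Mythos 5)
Your proposal is correct and follows essentially the same route as the paper: express $(\adj(A))_{ji}$ as $\det(A_j)$ with the $j$th column of $A$ replaced by $\bse_i$, normalize via Propositions \ref{mxinvthm} and \ref{asymcompare} exactly as in the Cramer's Rule computation, and then convert $\vdetbv(A_j,A_j,j,j)$ into $\vadjv(A,j,i)$. In fact you make explicit the one step the paper leaves implicit — using Lemma \ref{crossoutlemma} on the side strands and the identity $A_j\bse_j=\bse_i$ on the bottom strand — so nothing is missing.
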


%%%%%%%%%%%%%%%%%%%%%%%%%%%%%%%%%%%%%%%%%%%%%%%%%%%%%%%%%%%%%%%%%%%%%%%%%%%%%%%%%%%%%%%%%%%%%%%%%%%%%%%%%%%%
\section{The Not-So-Characteristic Equation}
%%%%%%%%%%%%%%%%%%%%%%%%%%%%%%%%%%%%%%%%%%%%%%%%%%%%%%%%%%%%%%%%%%%%%%%%%%%%%%%%%%%%%%%%%%%%%%%%%%%%%%%%%%%%

Recall that $p(\lambda)=\det(A-\lambda I)$ is a degree $n$ polynomial in $\lambda$, called the \emph{characteristic polynomial} of $A$.
By the fundamental theorem of algebra, it must have $n$ real or complex roots, counted with multiplicity, which are the \emph{eigenvalues} of $A$ leading to the equation
$A\bfv=\lambda\bfv$ for some \emph{eigenvector} $\bfv$. The \emph{Cayley-Hamilton Theorem} says that a matrix satisfies its own characteristic equation $p(A)=0$. For example, in
the case $A=\tmxt{2&3}{4&5}$, the characteristic polynomial is%
	$$\det(A-\lambda I)=\lambda^2-(\tr A)\lambda+\det(A)=\lambda^2-7\lambda-2,$$ 
and consequently
	$$\tmx{2&3}{4&5}^2-7\tmx{2&3}{4&5}-2\tmx{1&0}{0&1}=\tmx{0&0}{0&0}.$$
	
Our final result is actually the most trivial, an immediate consequence of Proposition \ref{asymcompare}:
\begin{thm}[Diagrammatic Cayley-Hamilton Theorem]\label{diagcayley}
	If $\asuppn(n+1)$ represents the anti-symmetrizer on $n+1$ vectors, then
	\begin{equation}\label{cayleydpf}
		\tchara(A,n+1)=0.
	\end{equation}
\end{thm}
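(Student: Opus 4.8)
The plan is to recognize the left-hand side of \eqref{cayleydpf} as a diagram that \emph{contains the antisymmetrizer $\asuppn(n+1)$ as a subdiagram}, and then quote Proposition~\ref{asymcompare}. Unpacking the picture $\tchara(A,n+1)$: it is the antisymmetrizer box on $n+1$ strands, with one strand left free (running straight from the bottom input to the top output) and each of the remaining $n$ strands threaded through a labelled edge $A$ and then closed off into a loop joining that strand's top to its own bottom. In the language of Theorem~\ref{decomptheorem} this is the composite of the map $\asuppn(n+1)\colon V^{\tensor(n+1)}\to V^{\tensor(n+1)}$ with ``apply $A$'' on $n$ of the factors and with $n$ cup/cap pairs that contract those factors away, leaving one free factor --- a perfectly good function $V\to V$.

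Now apply Proposition~\ref{asymcompare}: since $n+1>n$, the antisymmetrizer $\asuppn(n+1)$ is the zero map $V^{\tensor(n+1)}\to V^{\tensor(n+1)}$. Because assembling a diagram from its pieces --- vertical composition, horizontal juxtaposition, and the cup and cap maps of Theorem~\ref{decomptheorem} --- is multilinear in each piece, a subdiagram computing the zero map forces the whole diagram to compute zero. Hence $\tchara(A,n+1)=0$, and the stated identity is proved with no computation. (The only geometric content is the elementary fact that $\C^n$ carries no nonzero totally antisymmetric tensor on $n+1$ slots, because any $n+1$ vectors in $\C^n$ are linearly dependent.)

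What is genuinely left to do --- though it lies outside the statement above --- is to verify that this vanishing diagram is the \emph{Cayley--Hamilton} relation $p(A)=\sum_{k=0}^{n}(-1)^{k}c_{k}A^{\,n-k}=0$ in disguise. I would expand the antisymmetrizer box as the signed sum $\sum_{\sigma\in\Sigma_{n+1}}\sgn(\sigma)(\cdots)$ and then sort the terms of that sum by the $\sigma$-cycle that passes through the free strand: a cycle of length $j+1$ routes the free strand through $j$ of the $A$-loops and so contributes $A^{\,j}$ to the open output, while the cycles disjoint from the free strand close up into products of $A$-labelled loops. Gathering all closed-loop contributions of total length $k=n-j$ reproduces, after the sign and $(n-k)!$ bookkeeping already recorded in Proposition~\ref{asymcompare} and \eqref{asymcapcup}, the coefficient $c_{k}$ of the characteristic polynomial --- equivalently the $k$-th elementary symmetric function of the eigenvalues of $A$, as computed by the closed antisymmetrizer diagrams carrying $k$ copies of $A$. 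Setting the whole diagram equal to $0$ then reads off $p(A)=0$.

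The main obstacle is precisely this translation step, not the vanishing: matching the permutation-sum bookkeeping --- signs, factorials, and the split into ``free'' versus ``closed'' cycles --- against the standard formula for the characteristic polynomial. The vanishing itself is immediate from Proposition~\ref{asymcompare}; pinning down that what vanishes is exactly $p(A)$ is where the earlier normalizations --- Matrix Invariance (Proposition~\ref{mxinvthm}), Proposition~\ref{asymcompare}, and the evaluations in \eqref{asymcapcup} --- carry the weight.
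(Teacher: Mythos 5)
Your proof is correct and is essentially the paper's own argument: the theorem is stated there as ``an immediate consequence of Proposition \ref{asymcompare},'' namely that $\asuppn(k)=0$ for $k>n$, applied with $k=n+1$ to the antisymmetrizer sitting inside the diagram. Your closing remarks about sorting the permutation sum by the cycle through the free strand are also exactly how the paper completes the translation to $p(A)=0$ (Lemma \ref{asymsum} and Proposition \ref{propcharcoeff}), and you correctly identify that this step lies outside the theorem as stated.
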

But how does this \emph{not-so-characteristic} version of the characteristic equation relate to the formula $\det(A-\lambda I)$?

\begin{prop}\label{propcharcoeff}
	When the anti-symmetrizer $\asuppn(n+1)$ in \eqref{cayleydpf} is expanded, the coefficients of $A^i$ are equal to $n!$ times the
	coefficients of $\lambda^i$ in the characteristic polynomial $\det(A-\lambda I)=0$.
\end{prop}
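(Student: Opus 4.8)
\emph{Proof proposal.} The plan is to expand the antisymmetrizer $\asuppn(n+1)$ appearing in \eqref{cayleydpf} as a signed sum over $\Sigma_{n+1}$, read off the matrix polynomial in $A$ that the resulting diagram represents, and then match its coefficients against those of $\det(A-\lambda I)$ using the classical cycle-index (Newton) identity. First I would substitute $\asuppn(n+1)=\sum_{\sigma\in\Sigma_{n+1}}\sgn(\sigma)\,\sigma$ into the diagram $\tchara(A,n+1)$ and evaluate each term with Theorem \ref{decomptheorem}. Label the strands $1,\dots,n+1$, with strand $1$ the open one and strands $2,\dots,n+1$ the closed loops carrying $A$. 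Following the open strand through a permutation $\sigma$: it is sent to position $\sigma(1)$, and each time it lands on a closed position it traverses that loop, acquiring one factor of $A$, before being sent on; so it emerges carrying $A^{\,\ell(\sigma)-1}$, where $\ell(\sigma)$ is the length of the cycle of $\sigma$ containing the symbol $1$. Each remaining cycle $c$ of $\sigma$ closes into a loop with $|c|$ copies of $A$ on it and contributes the scalar $\tr(A^{|c|})$. Hence
\[
  \tchara(A,n+1)=\sum_{i=0}^{n}\Bigl(\,\sum_{\substack{\sigma\in\Sigma_{n+1}\\ \ell(\sigma)=i+1}}\sgn(\sigma)\!\!\prod_{c\not\ni 1}\!\!\tr\bigl(A^{|c|}\bigr)\Bigr)A^{i},
\]
the product running over the cycles of $\sigma$ other than the one containing $1$, so the claim reduces to showing the bracketed coefficient equals $n!\,[\lambda^i]\det(A-\lambda I)$.

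Next I would invoke the classical fact that $\det(A-\lambda I)=\sum_{i=0}^n(-1)^i e_{n-i}(A)\,\lambda^i$, where $e_k(A)$ is the $k$-th elementary symmetric function of the eigenvalues of $A$, together with the power-sum expansion $k!\,e_k(A)=\sum_{\tau\in\Sigma_k}\sgn(\tau)\prod_{\text{cycles }c}\tr(A^{|c|})$ (Newton's identities; equivalently, read off from $\det(I-tA)=\exp(-\sum_{m\ge 1}\tfrac{t^m}{m}\tr(A^m))$). Diagrammatically this last equality is just the statement that closing $\asuppn(k)$ off with a loop on every strand, one $A$ per strand, evaluates to $k!\,e_k(A)$, but it is in any case standard. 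Then the rest is bookkeeping: group the permutations in the bracketed sum by the support $T\ni 1$ of the cycle through $1$. For each $T$ with $|T|=i+1$ there are $\binom{n}{i}$ choices of $T$, the $i!$ cyclic orders on $T$ each have sign $(-1)^i$, and the permutation of the remaining $n-i$ symbols ranges over $\Sigma_{T^{c}}$, contributing $\sum_{\tau}\sgn(\tau)\prod_c\tr(A^{|c|})=(n-i)!\,e_{n-i}(A)$ independently of $T$. Multiplying gives $\binom{n}{i}\cdot i!(-1)^i\cdot(n-i)!\,e_{n-i}(A)=(-1)^i\,n!\,e_{n-i}(A)=n!\,[\lambda^i]\det(A-\lambda I)$, which is the assertion. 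As a check, for $n=2$ this says $\tchara(A,3)=(\tr(A)^2-\tr(A^2))\,I-2\tr(A)\,A+2A^2=2(A^2-\tr(A)A+\det(A)I)$, which indeed vanishes by Cayley--Hamilton, consistent with $\asuppn(3)=0$.

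The step I expect to be the main obstacle is the first one: pinning down exactly which matrix polynomial the diagram $\tchara(A,n+1)$ represents once $\asuppn(n+1)$ has been broken into permutations --- specifically that the open strand contributes $A^{\ell(\sigma)-1}$ and each of the remaining cycles contributes the trace factor $\tr(A^{|c|})$ --- and verifying that the strand orientations do no harm. At worst they replace each $A$ by its transpose, which is immaterial since $\tr(A^k)=\tr((A^T)^k)$ and $\det(A-\lambda I)=\det(A^T-\lambda I)$. Once that description of the diagram is secured, everything downstream is the elementary cycle-counting sketched above.
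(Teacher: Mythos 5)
Your proof is correct, and its first half coincides with the paper's own argument: grouping $\sigma\in\Sigma_{n+1}$ by the cycle through the open strand, with $\binom{n}{i}\,i!=n!/(n-i)!$ choices of that cycle, sign $(-1)^i$, and a residual permutation acting on the remaining $n-i$ closed strands, is exactly Lemma \ref{asymsum}. Where you genuinely diverge is in how the residual piece gets matched to the characteristic polynomial. The paper never evaluates the closed diagrams to scalars: it proves a diagrammatic expansion of $\det(A+B)$ (Lemma \ref{detsum}), reads off from \eqref{charcoeff} that the coefficient of $\lambda^i$ in $\det(A-\lambda I)$ is a fixed multiple of one particular closed diagram with $n-i$ strands labelled $A$, and then observes via Proposition \ref{asymcompare} that the very same diagram is the coefficient of $A^i$ in the expansion of \eqref{cayleydpf}; the identification is by inspection of identical pictures. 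You instead evaluate everything down to scalars, using the cycle-type evaluation $\prod_c\tr(A^{|c|})$ of a closed permutation diagram together with the classical identities $k!\,e_k(A)=\sum_{\tau\in\Sigma_k}\sgn(\tau)\prod_c\tr(A^{|c|})$ and $\det(A-\lambda I)=\sum_i(-1)^i e_{n-i}(A)\lambda^i$. This is a complete and correct argument --- your count $\binom{n}{i}\,i!\,(-1)^i\,(n-i)!\,e_{n-i}(A)=(-1)^i n!\,e_{n-i}(A)$ checks out, and the transpose worry is indeed immaterial --- but it imports Newton-type symmetric-function identities from outside the diagrammatic calculus; those identities are precisely the scalar shadow of Lemma \ref{detsum} combined with Proposition \ref{asymcompare}, so the paper's route proves them where yours assumes them. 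The trade-off: your version is self-contained for a reader fluent in classical multilinear algebra who has not absorbed the node identities, while the paper's version stays entirely inside the diagram calculus and requires no evaluation of closed diagrams at all.
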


As a first step, here is the determinant of a sum of matrices:
\begin{lemma}\label{detsum}
  Given $A,B\in\mnn$, the determinant sum $\det(A+B)$ is expressed diagrammatically as
  	$$\det(A+B)=\frac{(-1)^{\lfloor\frac{n}{2}\rfloor}}{n!}\sum_{i=0}^n\pmx{n}{i}\detsplitn(A,n-i,B,i).$$
\begin{proof}
	Replace $A$ in $\det(A)=\frac{(-1)^{\lfloor\frac{n}{2}\rfloor}}{n!}\detnn(A)$ with $A+B$.
	The result consists of $2^n$ diagrams, each of which has the form $\detnn(?)$ where $?$ is either $A$
	or $B$. The matrices in each summand may be reordered without introducing any signs, since any `switch' at the lower node must also be
	made at the upper node. So the terms may be grouped by the number of $B$'s in each. Since there are $\pmx{n}{i}$ with $i$ $B$ strands, the
	result follows.
\end{proof}
\end{lemma}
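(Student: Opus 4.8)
The plan is to reduce everything to the closed two-node ``determinant'' diagram and then exploit its multilinearity together with the antisymmetry of its vertices. The starting point is the last identity of \eqref{asymcapcup} in Proposition \ref{asymcompare}, which states that $\detnn(A)=(-1)^{\lfloor\frac{n}{2}\rfloor}n!\det(A)$, so that
$$\det(A+B)=\frac{(-1)^{\lfloor\frac{n}{2}\rfloor}}{n!}\detnn(A+B).$$
Thus it suffices to expand the single closed diagram $\detnn(A+B)$, in which each of the $n$ parallel strands joining the two $n$-vertices is labeled $A+B$.

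First I would expand by multilinearity on the edges. Each matrix-labeled strand corresponds to the basic map $\ima(M):\bfv\mapsto M\bfv$, which is linear in $M$; hence a strand labeled $A+B$ equals the sum of the same diagram with that strand relabeled $A$ and with it relabeled $B$. Applying this independently to all $n$ strands expands $\detnn(A+B)$ into a sum of $2^n$ closed diagrams, each a copy of the two-node diagram in which every strand carries either $A$ or $B$.

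Next I would group these $2^n$ diagrams according to the number $i$ of strands labeled $B$ (equivalently, $n-i$ strands labeled $A$). The key claim is that any two diagrams with the same value of $i$ evaluate to the same quantity, namely $\detsplitn(A,n-i,B,i)$, so that collecting all $A$-strands on one side and all $B$-strands on the other is sign-free. Any two such diagrams differ by a permutation $\sigma$ of which strands carry $B$, and reordering the strand bundle to sort the labels is precisely such a permutation. By Proposition \ref{nodeantisymmetry}, applying $\sigma$ to the edge-order at a single $n$-vertex multiplies the diagram by $\sgn(\sigma)$; but the two endpoints of the bundle are both $n$-vertices, and the one permutation $\sigma$ reorders the edges simultaneously at the top and the bottom node, so the sign is introduced twice and $\sgn(\sigma)^2=1$ cancels it. This is the crux, and the step I expect to be the main obstacle: it must be phrased so the reader sees the reordering as a single permutation of a bundle shared by two antisymmetric vertices, whence the net sign is trivial.

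Finally, counting the diagrams with exactly $i$ strands labeled $B$ gives $\binom{n}{i}$ equal terms, so
$$\detnn(A+B)=\sum_{i=0}^n\binom{n}{i}\detsplitn(A,n-i,B,i),$$
and multiplying through by $\frac{(-1)^{\lfloor\frac{n}{2}\rfloor}}{n!}$ produces exactly the claimed expansion of $\det(A+B)$. The remaining ingredients---linearity of the edge map and the binomial count---are entirely routine.
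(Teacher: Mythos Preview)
Your proposal is correct and follows essentially the same route as the paper: start from the identity $\det(A)=\frac{(-1)^{\lfloor n/2\rfloor}}{n!}\detnn(A)$ of Proposition~\ref{asymcompare}, expand each of the $n$ strands by linearity into $2^n$ summands, observe that reordering strands costs the same sign at both $n$-vertices and hence no sign at all, and collect the $\binom{n}{i}$ equal terms. Your write-up is in fact more explicit than the paper's about why the reordering is sign-free, invoking Proposition~\ref{nodeantisymmetry} at each vertex.
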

Consequently, the characteristic polynomial is 
	\begin{equation}\label{charcoeff}
		\det(A-\lambda I)=\frac{(-1)^{\lfloor\frac{n}{2}\rfloor}}{n!}\sum_{i=0}^n\left((-1)^i\pmx{n}{i}\dethalfn(A,n-i,i)\right)\lambda^i.
	\end{equation}
This means that the coefficients of the characteristic polynomial are, up to a constant factor, the $n+1$ ``simplest''
diagrams with nodes.

The next lemma provides a combinatorial decomposition of the anti-symmetrizer in \eqref{cayleydpf}:
\begin{lemma}\label{asymsum}
	For any $k$ with $0\leq k\leq n$,
		\begin{equation}\label{asymsumformula}
			\tchara(A,k+1)=\sum_{i=0}^k\frac{(-1)^i k!}{(k-i)!}\tpermvmvcc(A,k-i)\iml(A^i).
		\end{equation}
\begin{proof}
	Let $\sigma\in\Sigma_{k+1}$ be a permutation, and decompose it $\sigma=\tau\nu$,
	where $\tau$ is the cycle containing the first element and $\nu$ contains the remaining cycles.
	If $|\tau|=i+1$ is fixed, then there are $\frac{k!}{(k-i)!}$ choices for $\tau$ and therefore $k!$ total choices for
	each value of $i$. Since $\sgn(\tau)=(-1)^i$ and $\sgn(\nu)$ is incorporated into $\asuppn(k-i)$, the coefficient of $A^i$ is
	 $(-1)^i\frac{k!}{(k-i)!}$.
\end{proof}
\end{lemma}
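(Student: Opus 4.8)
The plan is to expand the box $\asuppn(k+1)$ as a signed sum over the symmetric group and then group the resulting terms according to how each permutation meets the single through-strand of $\tchara(A,k+1)$. Recall that $\tchara(A,k+1)$ is the diagram in which one strand passes freely through the antisymmetrizer $\asuppn(k+1)$ while the remaining $k$ strands each carry the matrix $A$ and are closed into loops; by Theorem~\ref{decomptheorem} it equals, as a map $V\to V$, the partial trace over the last $k$ tensor factors of the operator $\asuppn(k+1)\circ(\Id\tensor A^{\tensor k})$.

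First I would write $\asuppn(k+1)=\sum_{\sigma\in\Sigma_{k+1}}\sgn(\sigma)\,P_\sigma$, where $P_\sigma$ permutes the $k+1$ tensor factors, and distribute the partial trace across this sum. For a fixed $\sigma$ I would then invoke the standard identity for a partial trace of a permutation operator against a tensor product of operators: the cycle of $\sigma$ containing the untraced factor contributes the open operator obtained by multiplying the operators on the other factors of that cycle in cyclic order, while every other cycle contributes the scalar trace of the product of its operators. Since the untraced (free) strand carries $\Id$ and all other strands carry the same matrix $A$, a cycle of length $i+1$ through the free strand contributes exactly $A^i$ --- the cyclic order is immaterial --- which is the factor $\iml(A^i)$.

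Next I would decompose each $\sigma\in\Sigma_{k+1}$ as $\sigma=\tau\nu$, where $\tau$ is the cycle containing the free strand's index and $\nu$ is the product of the remaining cycles, a permutation of the other $k$ indices. Fixing $i$ with $|\tau|=i+1$, there are $\binom{k}{i}\,i!=\frac{k!}{(k-i)!}$ choices for $\tau$, and each contributes the same quantity: $\sgn(\tau)=(-1)^i$ times $A^i$ times whatever $\nu$ contributes. Summing $\sgn(\nu)$ times the product of the remaining cycle-traces over all $\nu\in\Sigma_{k-i}$ exactly reassembles the closed diagram $\tpermvmvcc(A,k-i)=\tr\bigl[\asuppn(k-i)\circ A^{\tensor (k-i)}\bigr]$. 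Collecting, the total contribution from cycles of length $i+1$ is $\frac{(-1)^i k!}{(k-i)!}\tpermvmvcc(A,k-i)\iml(A^i)$, and summing over $i=0,\dots,k$ gives \eqref{asymsumformula}.

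The step I expect to be the main obstacle is the operator identity used in the second paragraph: one must verify carefully, using Theorem~\ref{decomptheorem} to justify sliding the matrices around the permutation box, that the cycle through the free strand yields $A^i$ with no residual trace factor or twist and that each of the remaining cycles genuinely closes up into $\tr(A^\ell)$. Once this cycle-by-cycle bookkeeping is established, the sign accounting $\sgn(\sigma)=(-1)^i\sgn(\nu)$ and the enumeration of the cycles $\tau$ are routine, and the binomial identity $\binom{k}{i}i!=k!/(k-i)!$ supplies the stated coefficients.
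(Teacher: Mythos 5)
Your proposal is correct and follows essentially the same route as the paper: expand $\asuppn(k+1)$ over $\Sigma_{k+1}$, factor each permutation as $\sigma=\tau\nu$ with $\tau$ the cycle through the free strand, count the $\frac{k!}{(k-i)!}$ cycles of length $i+1$, and use $\sgn(\sigma)=(-1)^i\sgn(\nu)$ to reassemble the remaining cycles into $\tpermvmvcc(A,k-i)$. The paper's own proof is just a terser version of this; your added care with the partial-trace-of-a-permutation-operator identity is exactly the step the paper leaves implicit.
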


\begin{proof}[Proof of Proposition \ref{propcharcoeff}]
	As indicated in \eqref{charcoeff}, the coefficient of $\lambda^i$ in the characteristic polynomial is
		\begin{equation}\label{coeff1}
			c_i=\frac{(-1)^{i+\lfloor\frac{n}{2}\rfloor}}{i!(n-i)!}\dethalfn(A,n-i,i).
		\end{equation}
	On the other hand, letting $k=n$ in Lemma \ref{asymsum} and applying Proposition \ref{asymcompare} shows that
		\begin{align*}
			\tchara(A,n+1)&=\sum_{i=0}^n\frac{(-1)^i n!}{(n-i)!}\tpermvmvcc(A,n-i)\iml(A^i)\\
			&=\sum_{i=0}^n\frac{(-1)^i(-1)^{\lfloor\frac{n}{2}\rfloor}n!}{(n-i)!i!}\dethalfn(A,n-i,i)A^i.
		\end{align*}
	The coefficient in the last sum is $n! c_i$, so Equation \eqref{cayleydpf} is $n!$ times that given by the
	Cayley-Hamilton Theorem.
\end{proof}

%%%%%%%%%%%%%%%%%%%%%%%%%%%%%%%%%%%%%%%%%%%%%%%%%%%%%%%%%%%%%%%%%%%%%%%%%%%%%%%%%%%%%%%%%%%%%%%%%%%%%%%%%%%%
\section{Closing Thoughts}
%%%%%%%%%%%%%%%%%%%%%%%%%%%%%%%%%%%%%%%%%%%%%%%%%%%%%%%%%%%%%%%%%%%%%%%%%%%%%%%%%%%%%%%%%%%%%%%%%%%%%%%%%%%%

The results contained in this paper are just a start. It is an easy exercise to flip through a linear algebra textbook and find more results than can be expressed diagrammatically. Conversely, every diagrammatic relation has a linear algebra interpretation, so they are quite good at ``generating'' new formulas. 

As one final example, trace diagrams are remarkably good at capturing the concept of a matrix minor. Using this fact, Steven Morse has shown \cite{morse08} that both the Jacobi Determinant Theorem and Charles Dodsgon's \emph{condensation method} for calculating determinants have simple diagrammatic proofs.
The Jacobi Determinant Theorem is
		$$
		\tikz[yscale=.8,shift={(0,-1.5)}]{
			\hpatha(-1,0,0)(0,1,-1)\hpatha(1,0,0)(0,1,1)
			\hpathph(-.3,0,0)(0,1,-.3)\hpathph(.3,0,0)(0,1,.3)
			\hpatha(-2,2,0)(0,1,-2)\hpatha(2,2,0)(0,1,2)
			\foreach\xa/\xb in{-2/-1,-2/1,2/-1,2/1}{\hpathmxa(\xa,2,\xb)(\xa,3,\xb)(A)}
			\foreach\xa/\xb in{-2/-.2,-2/.2,2/-.2,2/.2,0/-.7,0/-.4,0/.4,0/.7}{\hpathph(\xa,2,\xb)(\xa,3,\xb)}
			\hpatha(0,4,-2)(-2,3,0)\hpatha(0,4,2)(2,3,0)
			\hpathph(-.3,5,0)(0,4,-.3)\hpathph(.3,5,0)(0,4,.3)
			\hpatha(0,4,-1)(-1,5,0)\hpatha(0,4,1)(1,5,0)
			\cilr<(0,1)>(-.7)
			\cilr<(0,4)>(-.7)
			\draw(0,.3)node[slabel]{$n-k$};
			\draw(0,2.8)node[slabel]{$k$};
			\draw(0,4.7)node[slabel]{$n-k$};
		}
		=
		\tikz[heightthree,scale=1.3]{
			\foreach\xa in{-1.5,-.75,1.5}{\hpatha(\xa,0,0)(0,.8,\xa)}
			\foreach\xa in{0,.75}{\hpathph(\xa,0,0)(0,.8,\xa)}
			\cilr<(0,.8)>(-.8)
			\foreach\xa in{-2,1,2}{\hpathmxa(0,2.2,\xa)(0,.8,\xa)(A)}
			\foreach\xa in{-.75,0}{\hpathph(0,.8,\xa)(0,2.2,\xa)}
			\cilr<(0,2.2)>(-.8)
			\foreach\xa in{-1.5,-.75,1.5}{\hpatha(0,2.2,\xa)(\xa,3,0)}
			\foreach\xa in{0,.75}{\hpathph(0,2.2,\xa)(\xa,3,0)}
			\draw(.3,2.75)node[slabel]{$n-k$};
			\draw(-.3,1.25)node[slabel]{$k$};
			\draw(.3,.05)node[slabel]{$n-k$};
		}
		$$
	and the condensation technique arises from the special case $k=n-2$.

Trace diagrams are a powerful technique for generating trace identities, which suggests that they may in the near future be
used to solve problems in invariant theory that are intractable with classic techniques.
Indeed, for $n$ matrices $\{A_1,\ldots,A_n\}$ it is an immediate consequence of Proposition \ref{asymcompare} that
	$$\tcharc(A,n)=\tchard(A,n).$$
This equation, which generalizes the characteristic equation, is sometimes called a \emph{polarization} of the characteristic polynomial, and is the source of all trace identities for certain matrix groups.

%%%%%%%%%%%%%%%%%%%%%%%%%%%%%%%%%%%%%%%%%%%%%%%%%%%%%%%%%%%%%%%%%%%%%%%%%%%%%%%%%%%%%%%%%%%%%%%%%%%%%%%%%%%%
\appendix
\section{Proofs of Node Identities}
%%%%%%%%%%%%%%%%%%%%%%%%%%%%%%%%%%%%%%%%%%%%%%%%%%%%%%%%%%%%%%%%%%%%%%%%%%%%%%%%%%%%%%%%%%%%%%%%%%%%%%%%%%%%

\begin{prop}[Proposition \ref{standardnode}]
		$$
			\asymnk(n-k,k):\bse_{\alpha_1}\tensor\cdots\tensor\bse_{\alpha_k} \longmapsto
			\sum_{\sigma\in\Sigma_{n-k}}\sgn(\alpha|\overset\leftarrow\sigma)\bse^{\sigma(k+1)}\tensor\cdots\tensor\bse^{\sigma(n)},
		$$
	where the sum is over permutations on the complement of $\{\alpha_1,\ldots,\alpha_k\}$ and
		$$\sgn(\alpha|\overset\leftarrow\sigma)
				=\sgn\left(\alpha_1\spc \cdots\spc \alpha_k\sigma(n)\spc \cdots\spc \sigma(k+1)\right).$$
\begin{proof}
	The function is computed by redrawing the diagram
		$$\asymnkcup(k,) = \asymnk(n-k,k).$$
	The cups transform an input $\bse_{\alpha_1}\dtensor\bse_{\alpha_k}$ to 
		$$\sum_{\sigma\in\Sigma_{n-k}}\bse_{\alpha_1}\dtensor\bse_{\alpha_k}
			\tensor\left(\bse_{\sigma(n)}\dtensor\bse_{\sigma(k+1)}\right)
			\tensor\left(\bse^{\sigma(k+1)}\tensor\cdots\tensor\bse^{\sigma(n)}\right).$$
	The summation is restricted to the complement of $\{\alpha_i\}$ since the determinant of any matrix with repeated columns is zero.
	Hence, it may be rewritten
		$$\sum_{\sigma\in\Sigma_{n-k}}\det[\bse_{\alpha_1}\spc \cdots\spc \bse_{\alpha_k}\spc \bse_{\sigma(n)}\spc \cdots\spc \bse_{\sigma(k+1)}]
			\bse^{\sigma(k+1)}\dtensor\bse^{\sigma(n)},$$
	and the determinant evaluates to the sign defined above.
\end{proof}
\end{prop}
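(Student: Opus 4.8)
The plan is to compute the diagram's function by a single appeal to topological invariance (Theorem~\ref{decomptheorem}), reducing the mixed node to cups together with the pure determinant node. The key observation is that $\asymnk(n-k,k)$ --- an $n$-valent sink with $k$ incoming vector strands and $n-k$ outgoing covector strands --- is isotopic to the all-inputs $n$-vertex $\nodendown(n)$ with $n-k$ of its legs ``bent up'' through cups:
	$$\asymnkcup(k,)=\asymnk(n-k,k).$$
Since isotopy preserves the associated function, it will suffice to evaluate the right-hand composite: apply $n-k$ copies of $\cupa:1\mapsto\sum_i\bse^i\tensor\bse_i$, route the $n-k$ resulting vectors back into the determinant node alongside the $k$ genuine inputs $\bse_{\alpha_1},\ldots,\bse_{\alpha_k}$, and read off the $n-k$ covectors as the output.

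Carrying this out, I would apply the cups to a basis input $\bse_{\alpha_1}\dtensor\bse_{\alpha_k}$. Each cup contributes a factor $\sum_j\bse^j\tensor\bse_j$; tracking how the bent legs reconnect to the node --- they arrive in the \emph{reverse} of their output order --- gives the intermediate state
	$$\sum_{j_{k+1},\ldots,j_n}\det[\bse_{\alpha_1}\spc \cdots\spc \bse_{\alpha_k}\spc \bse_{j_n}\spc \cdots\spc \bse_{j_{k+1}}]\,\bse^{j_{k+1}}\dtensor\bse^{j_n},$$
the sum running over all index tuples. A determinant of standard basis vectors vanishes unless $\{\alpha_1,\ldots,\alpha_k,j_{k+1},\ldots,j_n\}=\{1,\ldots,n\}$, i.e.\ unless the tuple $(j_{k+1},\ldots,j_n)$ is a bijection onto the complement of $\{\alpha_1,\ldots,\alpha_k\}$; identifying such a bijection with $\sigma\in\Sigma_{n-k}$, the surviving determinant equals $\sgn(\alpha_1\spc \cdots\spc \alpha_k\,\sigma(n)\spc \cdots\spc \sigma(k+1))=\sgn(\alpha|\overset\leftarrow\sigma)$. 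Collapsing the sum to these terms produces exactly the asserted formula, and the ``codeterminant'' special case drops out by taking $k=0$.

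The conceptual skeleton is routine --- the $n-k$ cups supply a resolution of the identity on $V^*\tensor V$, and the determinant node simply enforces distinctness of indices --- so the hard part will be the orientation and ordering bookkeeping hidden in the isotopy $\asymnkcup(k,)=\asymnk(n-k,k)$: one must check carefully that bending a vector-input leg up through a cup really does turn it into a covector-output (consistent with the sink convention of Definition~\ref{tddef}), and one must pin down the order-reversal $(\sigma(k+1),\ldots,\sigma(n))\mapsto(\sigma(n),\ldots,\sigma(k+1))$ responsible for the reversed permutation $\overset\leftarrow\sigma$ in the sign. As sanity checks on the final formula, antisymmetry of the map in its $k$ inputs matches Proposition~\ref{nodeantisymmetry}, and agreement with $\nodendown(n)$ when $k=n$ fixes the normalization.
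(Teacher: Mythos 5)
Your proposal is correct and follows essentially the same route as the paper: bend the $n-k$ output legs through cups to reduce to the pure determinant node, insert the resolution $\sum_j \bse^j\tensor\bse_j$ for each cup (with the order reversal on the bent legs), and observe that the determinant of basis vectors kills all terms except those indexed by a bijection onto the complement of $\{\alpha_1,\ldots,\alpha_k\}$, where it evaluates to $\sgn(\alpha|\overset\leftarrow\sigma)$. The only cosmetic difference is that you sum over all index tuples first and then discard the vanishing terms, whereas the paper restricts the sum to the complement from the outset.
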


\begin{prop}[Proposition \ref{asymcompare}]	
	If $k>n$, then $\asuppn(k)=0$. Otherwise, for $0\leq k\leq n$,
		$$\asuppn(k)=\frac{(-1)^{\lfloor\frac{n}{2}\rfloor}}{(n-k)!}\asymnkk(k,n-k,k).$$
\begin{proof}
	The fact that $\asuppn(k)=0$ for $k>n$ follows from	Proposition \ref{nodeantisymmetry}.

	For $k\leq n$, the function is computed by applying Proposition \ref{standardnode} twice.
	The image of $\bse_{\alpha_1}\tensor\cdots\tensor\bse_{\alpha_k}$ is
		$$
			\sum_{\tau\in\Sigma_k}\sum_{\sigma\in\Sigma_{n-k}}
			\sgn(\Id|\overset\leftarrow\sigma)\sgn(\sigma|\overset\leftarrow\tau)
				\bse_{\tau(1)}\dtensor\bse_{\tau(k)}
		$$
	Reversing the permutation in the $\sgn(\sigma|\overset\leftarrow\tau)$ term gives:
		\begin{align*}
			\sgn(\Id|\overset\leftarrow\sigma)\sgn(\sigma|\overset\leftarrow\tau)
				&=(-1)^{\lfloor\frac{n}{2}\rfloor}\sgn(\Id|\overset\leftarrow\sigma)\sgn(\tau|\overset\leftarrow\sigma)\\
				&=(-1)^{\lfloor\frac{n}{2}\rfloor}\sgn(\overset\leftarrow\sigma)^2\sgn(\beta)^2\sgn(\tau)\\
				&=(-1)^{\lfloor\frac{n}{2}\rfloor}\sgn(\tau).
		\end{align*}
	Here $\beta$ is a permutation that encodes the ordering of $\{\alpha_i\}$ and its complement, which is assumed
	to be consistent throughout the computation.
	Therefore,
		\begin{align*}
		\bse_{\alpha_1}\tensor\cdots\tensor\bse_{\alpha_k}
			&\longmapsto
			\sum_{\tau\in\Sigma_k}\sum_{\sigma\in\Sigma_{n-k}}(-1)^{\lfloor\frac{n}{2}\rfloor}\sgn(\tau)
				\bse_{\tau(1)}\dtensor\bse_{\tau(k)}\\
			&=(-1)^{\lfloor\frac{n}{2}\rfloor}(n-k)!\sum_{\tau\in\Sigma_k}\sgn(\tau)\bse_{\tau(1)}\dtensor\bse_{\tau(k)}.
			\qedhere
		\end{align*}
\end{proof}
\end{prop}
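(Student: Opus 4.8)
The plan is to compute the function attached to $\asymnkk(k,n-k,k)$ directly: the diagram is the vertical composite of the complemental antisymmetrizer $\asymnk(n-k,k)$ of Proposition \ref{standardnode} with its oppositely oriented mirror image, so two applications of that proposition should produce a closed form which I can then match against the definition of $\asuppn(k)$.

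First I would dispose of the range $k>n$. Here $\asuppn(k)$ is by definition an alternating sum over $\Sigma_k$, so by linearity it suffices to evaluate it on a basis tensor $\bse_{i_1}\dtensor\bse_{i_k}$; since $k$ exceeds the number of basis vectors, two of the indices coincide and the alternating sum cancels in pairs. (This is the diagrammatic avatar of $\Lambda^k\C^n=0$, and also the linear-dependence clause of Proposition \ref{nodeantisymmetry}.) Hence $\asuppn(k)=0$ in this range.

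Now fix $0\leq k\leq n$. Feeding a basis tensor $\bse_{\alpha_1}\dtensor\bse_{\alpha_k}$ into the lower node and invoking Proposition \ref{standardnode} produces $\sum_{\sigma\in\Sigma_{n-k}}\sgn(\alpha|\overset\leftarrow\sigma)\,\bse^{\sigma(k+1)}\dtensor\bse^{\sigma(n)}$, the sum being over permutations of the complement of $\{\alpha_i\}$. The upper node is a source and therefore, by the last sentence of Theorem \ref{decomptheorem}, computes a determinant of row vectors; applying Proposition \ref{standardnode} to it a second time turns this into a double sum over $\sigma\in\Sigma_{n-k}$ and $\tau\in\Sigma_k$ whose summand is a product of two sign symbols of the type $\sgn(\alpha|\overset\leftarrow\sigma)$, times $\bse_{\tau(1)}\dtensor\bse_{\tau(k)}$. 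The heart of the argument is collapsing that product of signs: a single reversal of the underlying length-$n$ sequence costs one factor $(-1)^{\lfloor\frac{n}{2}\rfloor}$, after which the remaining permutation signs --- including the fixed permutation $\beta$ that records how $\{\alpha_i\}$ is interleaved with its complement --- all appear squared, so the summand reduces to $(-1)^{\lfloor\frac{n}{2}\rfloor}\sgn(\tau)\,\bse_{\tau(1)}\dtensor\bse_{\tau(k)}$. This is independent of $\sigma$, so summing over $\Sigma_{n-k}$ merely contributes a factor $(n-k)!$, and the image works out to $(-1)^{\lfloor\frac{n}{2}\rfloor}(n-k)!\sum_{\tau\in\Sigma_k}\sgn(\tau)\,\bse_{\tau(1)}\dtensor\bse_{\tau(k)}$, which is exactly $(-1)^{\lfloor\frac{n}{2}\rfloor}(n-k)!$ times $\asuppn(k)$ evaluated on the same input. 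Solving for $\asuppn(k)$ gives the stated identity, and the three special cases in \eqref{asymcapcup} then follow by putting $k=n$, closing the resulting diagram into a loop, and labelling each strand with $A$.

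I expect the genuine obstacle to be that sign computation in the middle: tracking the two sign symbols through the second application of Proposition \ref{standardnode}, keeping the auxiliary ordering permutation $\beta$ fixed and consistent across both applications so that it really squares to the identity, and verifying that the reversal contributes precisely $(-1)^{\lfloor\frac{n}{2}\rfloor}$ rather than some other power of $-1$. Everything else --- the pigeonhole argument for $k>n$, the counting factor $(n-k)!$, and the deduction of the corollaries --- is routine once the two-node decomposition is set up.
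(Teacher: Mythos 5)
Your proposal is correct and follows essentially the same route as the paper: dispose of $k>n$ via antisymmetry/linear dependence, then compute $\asymnkk(k,n-k,k)$ on a basis tensor by applying Proposition \ref{standardnode} to each node in turn, collapse the product of sign symbols using one reversal (costing $(-1)^{\lfloor\frac{n}{2}\rfloor}$) and the squaring-out of the interleaving permutation $\beta$, and absorb the now $\sigma$-independent sum into a factor of $(n-k)!$. The step you flag as the genuine obstacle --- the sign bookkeeping through the second application of Proposition \ref{standardnode} --- is exactly the computation the paper carries out, and your outline of it matches theirs.
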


%%%%%%%%%%%%%%%%%%%%%%%%%%%%%%%%%%%%%%%%%%%%%%%%%%%%%%%%%%%%%%%%%%%%%%%%%%%%%%%%%%%%%%%%%%%%%%%%%%%%%%%%%%%%
%%%%%%%%%%%%%%%%%%%%%%%%%%%%%%%%%%%%%%%%%%%%%%%%%%%%%%%%%%%%%%%%%%%%%%%%%%%%%%%%%%%%%%%%%%%%%%%%%%%%%%%%%%%%
\bibliographystyle{plain}
\bibliography{elisha}

\begin{thebibliography}{10}

\bibitem{carter95}
J.~Carter, D.~Flath, and M.~Saito.
\newblock {\em The Classical and Quantum $6j$-Symbols}, volume~43 of {\em
  Mathematical Notes}.
\newblock Princeton University Press, 1995.

\bibitem{cvitanovic05}
P.~Cvitanovic.
\newblock {\em Group Theory}.
\newblock Princeton University Press, 2008.

\bibitem{fultonharris91}
W.~Fulton and J.~Harris.
\newblock {\em Representation Theory: A First Course}, volume 129 of {\em
  Graduate Texts in Mathematics}.
\newblock Springer-Verlag, 1991.

\bibitem{kauffman91}
Louis Kauffman.
\newblock {\em Knots and Physics}.
\newblock World Scientific, 1991.

\bibitem{lawtonpeterson08}
Sean Lawton and Elisha Peterson.
\newblock {\em Handbook of Teichm{\"u}ller Theory Volume II}, chapter Spin
  Networks and $SL(2,C)$ Character Varieties.
\newblock unknown, 2008.

\bibitem{levinson56}
I.~B. Levinson.
\newblock Sum of wigner coefficients and their graphical representation.
\newblock {\em Proceed. Physical-Technical Inst. Acad. Sci. Lithuanian SSR},
  2:17--30, 1956.

\bibitem{major99}
Seth~A. Major.
\newblock A spin network primer.
\newblock {\em AM.J.PHYS.}, 67:972, 1999.

\bibitem{morse08}
Steven Morse.
\newblock Work in progress.

\bibitem{penrose71}
Roger Penrose.
\newblock {\em Combinatorial Mathematics and its Applications}, chapter
  Applications of negative dimensional tensors.
\newblock Academic Press, 1971.

\bibitem{peterson06}
Elisha Peterson.
\newblock {\em Trace Diagrams, Representations, and Low-Dimensional Topology}.
\newblock PhD thesis, University of Maryland, College Park, 2006.

\bibitem{sikora01}
Adam Sikora.
\newblock $sl(n)$-character varieties as spaces of graphs.
\newblock {\em Trans. Amer. Math. Soc.}, 353:2773--2804, 2001.

\bibitem{stedman90}
G.~E. Stedman.
\newblock {\em Diagram Techniques in Group Theory}.
\newblock Cambridge University Press, 1990.

\end{thebibliography}

\end{document}